\DeclareMathOperator{\supp}{supp}
\DeclareMathOperator{\Id}{Id}
\renewcommand\Im{\hbox{{\rm Im}}\,}
\renewcommand\Re{\hbox{{\rm Re}}\,}
\newcommand{\Abs}[1]{\left\lvert#1\right\rvert}
\newcommand{\norm}[1]{\lVert#1\rVert}
\newcommand{\Br}[1]{\left(#1\right)}
\newcommand{\CBr}[1]{\left\{#1\right\}}
\newcommand{\jap}[1]{\langle#1\rangle}
\newcommand{\C}{{\mathbb C}}
\newcommand{\D}{{\mathbb D}}
\newcommand{\N}{\mathbb N}
\newcommand{\R}{{\mathbb R}}
\newcommand{\T}{{\mathbb T}}
\newcommand{\Z}{{\mathbb Z}}
\newcommand{\bT}{{\mathcal{T}}}
\newcommand{\calE}{\mathcal{E}}
\newcommand{\calN}{\mathcal{N}}
\newcommand{\calX}{\mathcal{X}}
\newcommand{\scrS}{{\mathscr S}}
\DeclareFontFamily{U}{mathx}{\hyphenchar\font45}
\DeclareFontShape{U}{mathx}{m}{n}{<5> <6> <7> <8> <9> <10>
<10.95> <12> <14.4> <17.28> <20.74> <24.88> mathx10}{}
\DeclareSymbolFont{mathx}{U}{mathx}{m}{n}
\DeclareMathAccent{\widecheck}{0}{mathx}{"71}
\numberwithin{equation}{section}
\theoremstyle{plain}
\newtheorem{theorem}{\bf Theorem}[section]
\newtheorem{theoremA}{\bf Theorem}
\newtheorem{lemma}[theorem]{\bf Lemma}
\newtheorem{proposition}[theorem]{\bf Proposition}
\newtheorem{corollary}[theorem]{\bf Corollary}
\theoremstyle{definition}
\theoremstyle{remark}
\newtheorem*{remark*}{\bf Remark}
\newtheorem{remark}[theorem]{\bf Remark}
\newtheorem{example}[theorem]{\bf Example}
\newcommand{\wb}{\overline}
\newcommand{\clos}{\overline}
\newcommand{\wh}{\widehat}
\newcommand{\wt}{\widetilde}
\newcommand{\eps}{\varepsilon}
\newcommand{\1}{\mathbbm{1}}
\newcommand{\dm}{\,\,\mathrm{d}}
\renewcommand{\[}{\begin{equation}}
\renewcommand{\]}{\end{equation}}
\begin{document} 
\title{Completeness of systems of inner functions}
\date{\today} 

\author{Nazar Miheisi}
\address{Department of Mathematics,
King's College London,
Strand, London WC2R 2LS,
United Kingdom}
\email{nazar.miheisi@kcl.ac.uk}

\subjclass[2020]{30B60, 30J05}

\keywords{Inner functions, Completeness problem, Hardy spaces,
Aleksandrov-Clark measures}

\begin{abstract}
For two inner functions $\vartheta,\varphi\in H^\infty$, we give a simple
sufficient condition for the system $\vartheta^m,\; \varphi^n$, $m,n\in\Z$,
to be complete in the weak-$^*$ topology of $L^\infty(\T)$. To be precise,
we show that this system is complete whenever there is an arc $I$ of the unit
circle $\T$ such that $\vartheta$ is univalent on $I$ and $\varphi$ is univalent
on $\T\setminus I$. As an application of this result, we describe a class of
analytic curves $\Gamma$ such that $(\Gamma, \calX)$ is a Heisenberg uniqueness
pair, where $\calX$ is the lattice cross $\{(m,n)\in\Z^2:\, mn=0\}$. Our main
result extends a theorem of Hedenmalm and Montes-Rodr\'iguez for atomic inner
functions with one singularity \cite{hem1}.
\end{abstract}

\maketitle


\section{Introduction}\label{sec:intro}


Let $\D=\{z\in\C:|z|<1\}$ be the unit disk and $\T=\partial\D$ be the
unit circle. For $1\leq p\leq \infty$, let $H^p(\T)$ denote the usual
Hardy space of functions in $L^p(\T)$ whose Poisson extension to $\D$
is analytic. We recall that a function $\vartheta\in H^\infty(\T)$ is
\emph{inner} if $|\vartheta(\zeta)|=1$ for almost all $\zeta\in\T$.

This paper is concerned with the following problem: for which inner functions
$\vartheta$ and $\varphi$ is the system $\vartheta^m,\; \varphi^n$, $m,n\in\Z$, complete
in the weak-$^*$ topology of $L^\infty(\T)$ -- that is, when does the system
have weak-$^*$ dense linear span? This problem was first considered in the
pioneering work of Hedenmalm and Montes-Rodr\'iguez \cite{hem1} in the special
case of the atomic inner functions
$$
\phi_1(z)=\exp\Br{\lambda_1\frac{z+1}{z-1}},
\quad
\phi_2(z)=\exp\Br{\lambda_2\frac{z-1}{z+1}},
\quad
\lambda_1,\lambda_2>0.
$$
The main result of \cite{hem1} is the following:

\begin{theoremA}[Hedenmalm--Montes-Rodr\'iguez]\label{thm:H-M}
The linear span of the system $\{\phi_1^m,\; \phi_2^n:\, m,n\in\Z\}$
is weak-$^*$ dense in $L^\infty(\T)$ if and only if $\lambda_1\lambda_2\leq\pi^2$.
\end{theoremA}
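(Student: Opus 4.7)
The plan is to establish the two implications separately: the forward one ($\lambda_1\lambda_2\leq\pi^2\Rightarrow$ completeness) by invoking the paper's main univalence-arc theorem, and the converse by exhibiting a non-trivial annihilator in $L^1(\T)$.

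For the sufficient direction I would begin by computing the boundary values $\phi_1(e^{i\theta})=e^{-i\lambda_1\cot(\theta/2)}$ and $\phi_2(e^{i\theta})=e^{i\lambda_2\tan(\theta/2)}$, so that $\phi_1$ is singular only at $1$ and $\phi_2$ only at $-1$, both being real-analytic elsewhere on $\T$. For $\alpha\in(0,\pi)$ consider the arc $I_\alpha=\{e^{i\theta}:\alpha<\theta<2\pi-\alpha\}$, which contains $-1$ but not $1$. On $I_\alpha$ the phase of $\phi_1$ is a monotone function of $\theta$ covering an interval of length $2\lambda_1\cot(\alpha/2)$, so $\phi_1$ is univalent there iff $\lambda_1\cot(\alpha/2)\leq\pi$; symmetrically, $\phi_2$ is univalent on $\T\setminus I_\alpha$ iff $\lambda_2\tan(\alpha/2)\leq\pi$. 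These two inequalities admit a common $\alpha$ iff $\lambda_1/\pi\leq\pi/\lambda_2$, i.e.\ iff $\lambda_1\lambda_2\leq\pi^2$, so under this hypothesis the main theorem of the paper yields the required weak-$^*$ density.

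For the converse I would pass to the real line through the Cayley transform $z=(i-x)/(i+x)$, which conjugates $\phi_1^m,\phi_2^n$ to the exponentials $e^{-im\lambda_1/x}$ and $e^{in\lambda_2 x}$. The existence of a non-zero $f\in L^1(\T)$ annihilating the system then translates to a non-zero density on $\R$ (with the appropriate Jacobian) whose $2\pi/\lambda_2$-periodization in $x$ vanishes and whose $2\pi/\lambda_1$-periodization after the involution $x\mapsto -1/x$ also vanishes. Combining these two identities yields a fixed-point equation $\calT f=f$ for a transfer operator $\calT$ associated with the Gauss-type map $x\mapsto -1/x\pmod{2\pi/\lambda_2}$ on a suitable fundamental domain. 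The genuinely delicate step, and the main obstacle, is the spectral analysis of $\calT$: one expects that in the regime $\lambda_1\lambda_2\leq\pi^2$ it is a strict contraction on an appropriate Banach space and hence admits no non-zero $L^1$ fixed point, whereas for $\lambda_1\lambda_2>\pi^2$ the expansion of the underlying dynamics degrades enough that a non-trivial fixed density can be built, either by a quasi-compactness and positivity argument or through an explicit series construction exploiting the self-similarity of the map.
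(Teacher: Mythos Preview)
Your sufficiency argument is correct and is exactly how the paper derives Theorem~A from its main theorem: the condition $\lambda_1\lambda_2\le\pi^2$ is reformulated as the existence of an arc on which $\phi_1$ is univalent with $\phi_2$ univalent on the complement, and then Theorem~\ref{thm:main} applies. Your computation of the boundary phases and the matching of the two univalence constraints via $\tan(\alpha/2)\in[\lambda_1/\pi,\pi/\lambda_2]$ is the right way to see this equivalence.

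The necessity direction, however, has a genuine gap. You propose to build a nonzero annihilator as a fixed point of a Gauss-type transfer operator and acknowledge that the spectral analysis is ``the main obstacle''; you do not actually carry it out. This is the hard road, and it is unnecessary. The paper stresses that necessity is ``easily seen'', and its proof of Theorem~\ref{thm:HUP} shows the mechanism: one only needs to check that when $\lambda_1\lambda_2>\pi^2$ the inner functions $\phi_1,\phi_2$ fail to separate points of $\D$. Concretely, in your half-plane coordinates look for $z_1=a+ib$, $z_2=-a+ib\in\C_+$ with $e^{i\lambda_2 z_1}=e^{i\lambda_2 z_2}$ and $e^{-i\lambda_1/z_1}=e^{-i\lambda_1/z_2}$. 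Taking $a=\pi/\lambda_2$ forces the first equality, and the second becomes $b^2=(\lambda_1\lambda_2-\pi^2)/\lambda_2^2$, which has a solution $b>0$ precisely when $\lambda_1\lambda_2>\pi^2$. Then $P(z_1,\cdot)-P(z_2,\cdot)\in L^1(\T)$ is a nonzero function annihilating every $\phi_1^m$ and every $\phi_2^n$, so the span cannot be weak-$^*$ dense. Your transfer-operator sketch, besides being incomplete, also conflates the two directions: the absence of a fixed point in the regime $\lambda_1\lambda_2\le\pi^2$ is already subsumed by your first paragraph, and is not part of the converse.
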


\noindent
Hedenmalm and Montes-Rodr\'iguez later strengthened this by showing that the
condition $\lambda_1\lambda_2\leq\pi^2$ also characterises the (weak-$^*$)
completeness of $\{\phi_1^m,\;\phi_2^n:\, m,n\geq0\}$ in $H^\infty(\T)$
\cite{hem2, hem3}. It is worth noting that the necessity of this condition is
easily seen and was known earlier; the main point of Theorem \ref{thm:H-M},
and the subsequent $H^\infty(\T)$ version, is the sufficiency.

A related question for the algebra $\C[\phi_1,\phi_2]$ was previously considered
by Mathesson and Stessin \cite{mas1}. In particular, they showed that for
$1\leq p<\infty$, $\C[\phi_1,\phi_2]$ is dense in $H^p(\T)$ if
$\lambda_1\lambda_2<\pi^2$ and not dense if $\lambda_1\lambda_2>\pi^2$. They
left the critical case $\lambda_1\lambda_2=\pi^2$ and the question of weak-$^*$
density in $H^\infty(\T)$ as open problems, both of which were settled by the
Hedenmalm--Montes-Rodr\'iguez theorems (although the latter also follows from
an earlier result of Nazaki \cite[Thm. 4]{Naz1}).


\subsection{Main results}\label{subsec:result}

The condition $\lambda_1\lambda_2\leq\pi^2$ in Theorem \ref{thm:H-M} is
equivalent to the following geometric property of $\phi_1$ and $\phi_2$:
$\T$ can be partitioned into two complementary arcs such that $\phi_1$
is univalent on one of the arcs and $\phi_2$ is univalent on the other.
The main purpose of this paper is to show that if this property is satisfied
by arbitrary inner functions, the conclusion still holds.
Our main theorem is the following:

\begin{theorem}\label{thm:main}
Let $\vartheta$ and $\varphi$ be inner functions of degree at least $3$.
Suppose there is an open arc $I\subseteq\T$ such that
\begin{enumerate}[label=(\arabic*),itemsep=1pt,topsep=0pt]
\item\label{thm:main:cond1}
$\vartheta$ is univalent on $I$ and $\varphi$ is univalent on
$\T\setminus \clos{I}$;
\item\label{thm:main:cond2}
$\vartheta$ and $\varphi$ are continuous at the endpoints of $I$.
\end{enumerate}
Then the linear span of the system $\{\vartheta^m,\,\varphi^n: m,n\in\Z\}$ is
weak-$^*$ dense in $L^\infty(\T)$.
\end{theorem}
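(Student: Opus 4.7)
The plan is to proceed by duality. By the Hahn-Banach theorem, the weak-$*$ density of $\Span\{\vartheta^m,\varphi^n : m,n\in\Z\}$ in $L^\infty(\T)$ is equivalent to showing that every $f\in L^1(\T)$ satisfying
\begin{equation*}
\int_\T f\,\vartheta^m\,d\sigma \;=\; \int_\T f\,\varphi^n\,d\sigma \;=\; 0 \quad\text{for all }m,n\in\Z
\end{equation*}
is zero almost everywhere. I reduce this to a pointwise identity via the Aleksandrov disintegration theorem. For an inner function $u$, the Clark measures $\{\mu_\alpha^u\}_{\alpha\in\T}$ are purely atomic for a.e.\ $\alpha$ and provide a disintegration of Lebesgue measure $\sigma$ on $\T$; since $\mu_\alpha^u$ is concentrated on $u^{-1}(\alpha)$, a standard computation yields
\begin{equation*}
\int_\T f\,u^m\,d\sigma \;=\; \int_\T \alpha^m F_u(\alpha)\,dm(\alpha), \qquad F_u(\alpha) \;:=\; \sum_{u(z)=\alpha}\frac{f(z)}{|u'(z)|}.
\end{equation*}
Vanishing for every $m\in\Z$ then forces $F_\vartheta\equiv F_\varphi\equiv 0$ a.e., giving the two pointwise identities
\begin{equation*}
\sum_{\vartheta(z)=\alpha}\frac{f(z)}{|\vartheta'(z)|}=0 \quad\text{and}\quad \sum_{\varphi(z)=\alpha}\frac{f(z)}{|\varphi'(z)|}=0 \quad\text{for a.e.\ }\alpha\in\T.
\end{equation*}

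Next I invoke the univalence hypothesis. Set $J=\T\setminus\overline{I}$. By the continuity condition, $\vartheta$ restricts to a homeomorphism $\overline{I}\to\overline{\vartheta(I)}$ with inverse branch $s_\vartheta$, and similarly $\varphi$ has an inverse branch $s_\varphi:\overline{\varphi(J)}\to\overline{J}$. Isolating the distinguished univalent preimage in each of the two identities above produces
\begin{equation*}
f(s_\vartheta(\alpha)) \;=\; -|\vartheta'(s_\vartheta(\alpha))| \sum_{z\in \vartheta^{-1}(\alpha)\cap J}\frac{f(z)}{|\vartheta'(z)|}, \qquad \alpha\in \vartheta(I),
\end{equation*}
so $f|_I$ is a linear function of $f|_J$; symmetrically $f|_J$ is a linear function of $f|_I$. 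Composing the two relations yields a fixed-point equation $f|_I = T(f|_I)$, where $T$ is a composition of two positive Perron-Frobenius-type transfer operators. The theorem thus reduces to showing that $T$ has no nonzero fixed point.

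This last step is the main obstacle. The operator $T$ should be interpreted as the transfer operator of a piecewise-expanding dynamical system on $I$ obtained by alternating appropriate inverse branches of $\vartheta$ and $\varphi$. My plan is to iterate: $T^n(f|_I)(y)$ is expressible as a weighted sum of $f$-values along length-$n$ orbits of this system, with weights given by products of reciprocal angular derivatives. Each application of $T$ discards the mass supported on the univalent preimage in the corresponding fiber, so I expect a strict contraction estimate $\|T^n(f|_I)\|\to 0$ in a norm adapted to the Clark-measure weights (e.g.\ an $L^\infty$ or weighted $L^1$ norm). The degree $\geq 3$ hypothesis should enter precisely here, ensuring that every fiber contains at least two non-univalent preimages and hence that a positive fraction of the mass is lost at each step; the continuity of $\vartheta$ and $\varphi$ at the endpoints of $I$ is used so that $s_\vartheta$ and $s_\varphi$ extend continuously across $\partial I$ and the dynamical system is well-defined on the closed arc. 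Once the contraction is established, iterating $f|_I = T^n(f|_I)$ forces $f|_I=0$, hence $f|_J=0$ by the symmetric identity and $f\equiv 0$ a.e.\ as required.
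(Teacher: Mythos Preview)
Your reduction to the fixed-point equation $f|_I = \bT(f|_I)$ for the composite transfer operator $\bT$ is correct and matches the paper's setup exactly. The genuine gap is in the last paragraph: the expectation that $\bT$ is a strict contraction, with ``a positive fraction of the mass lost at each step,'' is false in the decisive case. When $\vartheta(I)=\T$ and $\varphi(J)=\T$ (the analogue of the critical case $\lambda_1\lambda_2=\pi^2$ in Theorem~A), the Aleksandrov disintegration shows that $\bT$ is an \emph{isometry} on the positive cone of $L^1(I)$: one has $\jap{\bT|f|,\1}=\jap{|f|,\1_{\Lambda(I)}}=\jap{|f|,\1}$, since $\Lambda(I)=I$. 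No mass is discarded; the univalent preimage is not ``lost'' but simply relocated. Your interpretation of the degree $\geq 3$ hypothesis is also off: having extra preimages in the fiber does not by itself produce any norm loss.

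The paper handles the critical case by a completely different mechanism. It first proves that $\bT$ has no nonzero \emph{analytic} fixed points (this uses a delicate local analysis near the endpoints of $I$, where a local invariant $\lambda$ has a neutral fixed point with $|\lambda'(1)|=1$, together with a growth estimate on iterates forcing $f\notin L^1$). It then constructs, via a family of generalised Nevanlinna counting functions, analytic extensions of the iterates $\bT^j\1$ to $\C_+$ whose convex hull is a normal family. Krengel's stochastic ergodic theorem gives convergence of the Ces\`aro averages in measure to a fixed point, which by normality must be analytic, hence zero. This rules out all fixed points. The degree $\geq3$ condition actually enters in the counting-function machinery (to ensure $\vartheta$ is univalent across the diameter of $\D$), not in any contraction estimate.
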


Let $BMOA(\T)$ denote the subspace of $H^1(\T)$ consisting of functions
of bounded mean oscillation. It is a classical fact that the
Riesz projection is bounded from $L^p(\T)$ to $H^p(\T)$ for $1<p<\infty$, and
weak-$^*$ continuous from $L^\infty(\T)$ to $BMOA(\T)$. As a consequence,
we get the following corollary:

\begin{corollary}\label{cor:Hp}
Let $\vartheta$ and $\varphi$ satisfy the conditions of Theorem \ref{thm:main}.
Then the linear span of the system $\{\vartheta^m,\,\varphi^n: m,n\geq0\}$ is
dense in $H^p(\T)$ for each $1\leq p<\infty$, and weak-$^*$ dense in $BMOA(\T)$.
\end{corollary}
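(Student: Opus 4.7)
The plan is to deduce Corollary~\ref{cor:Hp} from Theorem~\ref{thm:main} by post-composing with the Riesz projection $P_+$, relying on the mapping properties recorded just before the statement.

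First, I would upgrade the weak-$*$ density in $L^\infty$ from Theorem~\ref{thm:main} to norm density of $\Span\{\vartheta^m,\varphi^n:m,n\in\Z\}$ in $L^p(\T)$ for each $1\le p<\infty$. Weak-$*$ density in $L^\infty$ is equivalent, by Hahn--Banach, to triviality of the preannihilator in $L^1$; since $L^{p'}\hookrightarrow L^1$ for all $1\le p'\le\infty$, the annihilator in $L^{p'}$ is a fortiori trivial, giving $L^p$-norm density.

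Next, for $H^p$-density in the range $1<p<\infty$, I would apply the bounded projection $P_+\colon L^p\to H^p$. If $f\in H^p$ and $h_k\to f$ in $L^p$-norm with $h_k\in\Span\{\vartheta^m,\varphi^n:m,n\in\Z\}$, then $P_+h_k\to P_+f=f$ in $H^p$. A direct Fourier-series check shows that $P_+\vartheta^m=\vartheta^m$ for $m\ge 0$, while for $m\ge 1$, $P_+\vartheta^{-m}=P_+\overline{\vartheta^m}=\overline{\vartheta(0)^m}$, a constant, and likewise for $\varphi^n$. Thus $P_+h_k$ lies in $\Span\{\vartheta^m,\varphi^n:m,n\ge 0\}$ (constants being included as $\vartheta^0$). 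The case $p=1$ is a routine bootstrap from $p=2$: given $f\in H^1$, first approximate $f$ in $H^1$-norm by a dilate $f_r\in H^2$, then approximate $f_r$ in $H^2$-norm by the span; the bound $\|\cdot\|_{H^1}\le\|\cdot\|_{H^2}$ and the triangle inequality finish the job.

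For weak-$*$ density in $BMOA(\T)$, I would use the Fefferman duality $BMOA=(H^1)^*$ and verify that the preannihilator of $\Span\{\vartheta^m,\varphi^n:m,n\ge 0\}$ in $H^1$ is trivial. Suppose $f\in H^1$ satisfies $\int\vartheta^m\overline{f}\,dm=0$ for every $m\ge 0$ and $\int\varphi^n\overline{f}\,dm=0$ for every $n\ge 0$. Set $k=\overline{f}\in\overline{H^1}$. The case $m=0$ gives $\int k\,dm=0$, so $k$ is supported on strictly negative Fourier frequencies. Since $\overline{\vartheta^m}$ is supported on non-positive frequencies, the Parseval pairing automatically yields $\int\vartheta^{-m}k\,dm=0$ for all $m\ge 1$, and likewise for $\varphi$. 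Hence $k$ annihilates the full system $\{\vartheta^m,\varphi^n:m,n\in\Z\}$ in the $L^1$--$L^\infty$ pairing, and Theorem~\ref{thm:main} forces $k=0$, i.e., $f=0$. The whole deduction is essentially mechanical once Theorem~\ref{thm:main} is in hand; the only mildly delicate points are the $p=1$ bootstrap and the Fourier-support bookkeeping in the $BMOA$ step.
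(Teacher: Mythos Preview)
Your proposal is correct and follows essentially the same route as the paper, which simply invokes the boundedness of the Riesz projection $P_+:L^p\to H^p$ for $1<p<\infty$ and its weak-$*$ continuity $L^\infty\to BMOA$, together with the (implicit) observation that $P_+$ sends $\vartheta^{-m}=\overline{\vartheta^m}$ to a constant. You have in fact filled in details the paper omits---most notably the $p=1$ case (which the paper's stated hypothesis $1<p<\infty$ on $P_+$ does not directly cover) and the explicit preannihilator computation for $BMOA$---so your argument is, if anything, more complete.
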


\noindent
Theorem \ref{thm:main} and Corollary \ref{cor:Hp} naturally prompt the
following question: if $\vartheta$ and $\varphi$ satisfy the conditions of
Theorem \ref{thm:main}, is the linear span of $\{\vartheta^m,\,\varphi^n:
m,n\geq0\}$ weak-$^*$ dense in $H^\infty(\T)$? This is an intriguing problem
which remains open.

For the algebra $\C[\vartheta, \varphi]$ we can say more. Indeed, a result
of Nazaki \cite[Thm. 4]{Naz1} states that any subalgebra of $H^\infty(\T)$
which contains the constants and is dense in $H^2(\T)$ must be weak-$^*$ dense
in $H^\infty(\T)$. Combining this with Corollary \ref{cor:Hp} immediately
yields the following:

\begin{corollary}\label{cor:algebra}
Let $\vartheta$ and $\varphi$ satisfy the conditions of Theorem \ref{thm:main}.
Then the algebra $\C[\vartheta,\varphi]$ is weak-$^*$ dense in $H^\infty(\T)$.
\end{corollary}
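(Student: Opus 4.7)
The approach is exactly the one hinted at in the paragraph preceding the corollary: combine Corollary \ref{cor:Hp} with Nazaki's density theorem \cite[Thm.~4]{Naz1}. The argument is a short three-step bookkeeping, with no substantive analysis beyond what has already been done in Theorem \ref{thm:main} and its passage through the Riesz projection to Corollary \ref{cor:Hp}.

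First, I would verify that $\C[\vartheta,\varphi]$ is a unital subalgebra of $H^\infty(\T)$. Since $\vartheta,\varphi\in H^\infty(\T)$ and $H^\infty(\T)$ is closed under pointwise multiplication, every complex polynomial expression in $\vartheta$ and $\varphi$ lies in $H^\infty(\T)$; the constant polynomials ensure $\C\subseteq\C[\vartheta,\varphi]$.

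Second, I would show that $\C[\vartheta,\varphi]$ is norm dense in $H^2(\T)$. This is immediate from Corollary \ref{cor:Hp} applied with $p=2$: the linear span of $\{\vartheta^m,\varphi^n:m,n\geq 0\}$ is already dense in $H^2(\T)$, and this linear span is trivially contained in the algebra $\C[\vartheta,\varphi]$ (every $\vartheta^m$ and every $\varphi^n$ is a monomial, hence an element of the polynomial algebra).

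Third, I would invoke Nazaki's theorem \cite[Thm.~4]{Naz1}, which states that any unital subalgebra of $H^\infty(\T)$ that is dense in $H^2(\T)$ must automatically be weak-$^*$ dense in $H^\infty(\T)$. Combining this with the previous step delivers the conclusion. There is no real obstacle in this argument; the non-trivial content has been absorbed into Theorem \ref{thm:main}, Corollary \ref{cor:Hp}, and Nazaki's cited theorem. The only thing to be careful about is that Corollary \ref{cor:Hp} provides density of the \emph{linear span} rather than the \emph{algebra}, but since the algebra contains the linear span, this distinction only works in our favour.
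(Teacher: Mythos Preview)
Your proposal is correct and follows exactly the argument the paper gives: the paper's proof is simply the sentence preceding the corollary, which combines Corollary~\ref{cor:Hp} (density in $H^2(\T)$) with Nazaki's theorem \cite[Thm.~4]{Naz1}. Your three-step writeup just makes explicit the bookkeeping that the paper leaves implicit.
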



\subsection{Heisenberg uniqueness pairs}\label{subsec:HUP}

Let $M(\R^2)$ denote the set of finite Borel measures on $\R^2$. For $\mu\in
M(\R^2)$, the Fourier transform of $\mu$ is
$$
\wh{\mu}(\xi_1,\xi_2)
=
\int_{\R^2} e^{-i \pi(x_1\xi_1 + x_2\xi_2)} \dm\mu(x_1,x_2).
$$
Let $\Gamma$ be a piecewise-smooth curve in $\R^2$ (which need not be connected).
Let $AC(\Gamma)$ be the set of measures in $M(\R^2)$ which are supported on
$\Gamma$ and are absolutely continuous with respect to arc length.
Given a subset $\Sigma$ of $\R^2$, we say that $(\Gamma, \Sigma)$ is a
\emph{Heisenberg uniqueness pair} if $\mu\in AC(\Gamma)$ and
$\wh\mu|_\Sigma=0$ implies $\mu=0$.

The concept of Heisenberg uniqueness pairs was introduced by Hedenmalm and
Montes-Rodr\'iguez \cite{hem1}. Their main result, which is an alternative formulation of Theorem \ref{thm:H-M}, is that if $\Gamma$ is
the graph of $t\mapsto \beta/t$, where $\beta>0$, and $\calX$ is the
\emph{lattice cross}
\[
\calX=\{(m,n)\in\Z^2:\, mn=0\},
\label{eq:cross}
\]
then $(\Gamma, \calX)$ is a Heisenberg uniqueness pair if and only if
$\beta\leq1$. Since then, the problem of exhibiting Heisenberg uniqueness
pairs has attracted considerable attention
(see e.g. \cite{bab1,bag1, gir1,gis1, Goncalves-Ramos1,jak1,lev1,sjo1,sjo2}).

As an application of Theorem \ref{thm:main}, we will show that
$(\Gamma, \calX)$ is a Heisenberg uniqueness pair for a more general class
of curves than the hyperbola. In particular, we will consider the case
where $\Gamma$ is the graph of the function
$$
F_\nu(t) = \int_{-1}^1 \frac{\dm\nu(s)}{t-s},
$$
where $\nu$ is a positive singular measure on $(-1,1)$. Observe that by
taking $\nu=\beta\delta_0$ in the following theorem, we recover the
Hedenmalm--Montes-Rodr\'iguez result for the graph of $t\mapsto\beta/t$.

\begin{theorem}\label{thm:HUP}
Let $\nu\in M(\R)$ be a positive singular measure whose closed support is
contained in $(-1,1)$. Let $\Gamma=\{(t,F_\nu(t)): t\in\R\setminus \supp\nu\}$
and let $\calX$ be the lattice cross \eqref{eq:cross}. If
\[
\int_{-1}^1 \frac{\dm\nu(t)}{1-t^2} \leq 1,
\label{eq:HUP}
\]
then $(\Gamma, \calX)$ is a Heisenberg uniqueness pair.
Moreover, if $\nu$ is even, i.e. $\nu(E)=\nu(-E)$ for each Borel set $E$,
the condition \eqref{eq:HUP} is also necessary for $(\Gamma, \calX)$
to be a Heisenberg uniqueness pair.
\end{theorem}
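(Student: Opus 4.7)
The plan is to transport the problem to the unit disc via the Cayley map, apply Theorem~\ref{thm:main} to obtain sufficiency, and handle the converse for even $\nu$ by an explicit construction.

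First I would set up the reduction. Let $t(z) = i(1+z)/(1-z)$ be the Cayley map, sending $\D$ conformally onto $\C^+$ and restricting to the bijection $e^{i\theta}\mapsto -\cot(\theta/2)$ from $\T\setminus\{1\}$ onto $\R$. Define
\[
\vartheta(z) = \exp\bigl(i\pi\, t(z)\bigr), \qquad \varphi(z) = \exp\bigl(-i\pi\, F_\nu(t(z))\bigr),
\]
where $F_\nu(\zeta) = \int_{-1}^{1} d\nu(s)/(\zeta - s)$ for $\zeta\in\C^+$. Then $\vartheta$ is the atomic singular inner function with its only singularity at $z=1$, and $\varphi$ is inner because $-F_\nu\circ t$ maps $\D$ into $\C^+$ and, by Plemelj--Sokhotski, has real boundary values a.e.\ on $\T$ (this uses singularity of $\nu$). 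Now if $\mu\in AC(\Gamma)$ satisfies $\wh\mu|_\calX = 0$, parametrise $\mu$ by $d\mu = f(t)\,dt$ on $\Gamma$ with $f\in L^1(\R\setminus\supp\nu)$ (the arc-length density absorbed into $f$); pulling back under $\zeta\mapsto t(\zeta)$ produces an $H\in L^1(\T)$, and the vanishing $\wh\mu|_\calX = 0$ becomes
\[
\int_\T H\,\vartheta^m\,d\zeta = 0 = \int_\T H\,\varphi^n\,d\zeta, \qquad m,n\in\Z,
\]
so $H$ lies in the annihilator of $\mathrm{span}\{\vartheta^m,\varphi^n\}$ in $L^1(\T)$.

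Next I would verify the hypotheses of Theorem~\ref{thm:main}. Both $\vartheta$ and $\varphi$ are singular inner of infinite degree. Take $I$ to be the open arc of $\T$ corresponding to $t\in(-1,1)$, with endpoints $\pm i$; continuity of $\vartheta$ and $\varphi$ at these points holds because the only singularity of $\vartheta$ is at $z=1$ and $\supp\nu\subset(-1,1)$ strictly. Univalence of $\vartheta = e^{i\pi t}$ on $I$ is immediate. On $\T\setminus\clos I$, which corresponds to $t\in(-\infty,-1)\cup\{\infty\}\cup(1,\infty)$, the strict negativity of $F_\nu'(t) = -\int d\nu(s)/(t-s)^2$ makes $F_\nu$ decreasing on each interval; combined with $F_\nu(\pm\infty) = 0$, $F_\nu$ is a bijection from $\T\setminus\clos I$ onto the open interval $(F_\nu(-1), F_\nu(1))$. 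A partial-fractions identity gives
\[
F_\nu(1) - F_\nu(-1) = 2\int_{-1}^{1} \frac{d\nu(s)}{1-s^2} \leq 2,
\]
so $y\mapsto e^{-i\pi y}$ is univalent on this open interval of length at most $2$, whence $\varphi$ is univalent on $\T\setminus\clos I$. Theorem~\ref{thm:main} now yields weak-$^*$ density of $\mathrm{span}\{\vartheta^m,\varphi^n\}$ in $L^\infty(\T)$, forcing $H = 0$ and hence $\mu = 0$.

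For the necessity when $\nu$ is even, suppose $\int d\nu/(1-s^2) > 1$. Then $F_\nu$ is odd with $F_\nu(1) > 1$, so by the intermediate value theorem there exists $t_0 > 1$ with $F_\nu(t_0) = 1$ and $F_\nu(-t_0) = -1$; in particular $\varphi(t_0) = \varphi(-t_0) = -1$, so $\varphi^n$ fails to distinguish $t_0$ from $-t_0$, and the counterexample will be driven by this collision. I would construct an odd $f\in L^1(\R\setminus\supp\nu)$ concentrated near $\{\pm t_0\}$ together with compensating mass distributed over symmetric preimages of $F_\nu$ in the gaps of $\supp\nu$, chosen so that
\[
\int_\R f(t)\,e^{-i\pi m t}\,dt = 0, \qquad \int_\R f(t)\,e^{-i\pi n F_\nu(t)}\,dt = 0, \quad m,n\in\Z;
\]
reversing the change of variables produces the required nonzero $\mu\in AC(\Gamma)$ with $\wh\mu|_\calX = 0$. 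The main obstacle lies in this construction: the two vanishing conditions are Poisson-type identities --- the first expressing $\sum_{k\in\Z} f(t+2k) = 0$ and the second its analogue after the nonlinear substitution $y = F_\nu(t)$ --- so satisfying both simultaneously requires careful bookkeeping of the preimages of $F_\nu$, in the spirit of the invariant-measure/transfer-operator argument used by Hedenmalm--Montes-Rodr\'iguez in the atomic case.
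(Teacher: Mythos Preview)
Your sufficiency argument is correct and matches the paper's: reduce to weak-$^*$ density in $L^\infty$, transport to the disc, and verify the univalence hypotheses of Theorem~\ref{thm:main} via the identity $F_\nu(1)-F_\nu(-1)=2\int d\nu/(1-s^2)$.

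The necessity argument, however, has a genuine gap. Finding a boundary point $t_0>1$ with $F_\nu(t_0)=1$ gives $\varphi(t_0)=\varphi(-t_0)$, but $\vartheta(t_0)=e^{i\pi t_0}$ and $\vartheta(-t_0)=e^{-i\pi t_0}$ need not agree, so the pair $\{t_0,-t_0\}$ is generally separated by the system. You recognise this and propose to build an explicit odd annihilator $f\in L^1$ by ``careful bookkeeping of preimages'', but this construction is not carried out and is substantially harder than what is needed.

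The paper's idea is to move off the boundary into $\C_+$. For even $\nu$ one has $F_\nu(x+iy)=-\overline{F_\nu(-x+iy)}$, so $y\mapsto F_\nu(1+iy)-F_\nu(-1+iy)$ is real, continuous, equals $F_\nu(1)-F_\nu(-1)>2$ at $y=0$, and tends to $0$ as $y\to\infty$. Hence there is $y_0>0$ with $F_\nu(1+iy_0)-F_\nu(-1+iy_0)=2$, and at the two interior points $w_\pm=\pm1+iy_0$ one has simultaneously $e^{i\pi w_+}=e^{i\pi w_-}$ (always, since $e^{2\pi i}=1$) and $e^{-i\pi F_\nu(w_+)}=e^{-i\pi F_\nu(w_-)}$. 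Thus the Poisson (analytic) extensions of $\vartheta$ and $\varphi$ fail to separate $w_+$ from $w_-$, and the difference of the two Poisson kernels furnishes a nonzero $L^1$ annihilator immediately. This replaces your unspecified boundary construction with a two-line argument.
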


\begin{proof}
Let us observe that $\mu\in AC(\Gamma)$ if and only
if there exists a function $f\in~L^1(\R,\sqrt{1+|F_\nu'(t)|^2}\dm t)$ such that
for each $m,n\in\Z$,
$$
\wh\mu(m,n)
=
\int_\R e^{-im\pi t}e^{-in\pi F_\nu(t)}\,f(t)\sqrt{1+|F_\nu'(t)|^2}\dm t.
$$
Hence $(\Gamma,\calX)$ is a Heisenberg uniqueness pair precisely if the
linear span of $\{e^{im\pi t},\; e^{-in\pi F_\nu(t)}:\;m,n\in\Z\}$ is
weak-$^*$ dense in $L^\infty(\R)$. By composing with a conformal map from $\D$
to the upper half plane $\C_+$ and using Theorem \ref{thm:main}, we see that
this will hold if $e^{-i\pi F_\nu(t)}$ is univalent on $\R\setminus[-1,1]$ and
continuous at $\pm1$.

The condition on the support of $\nu$ ensures that $F_\nu$ is continuous at
$\pm1$. Observe that that $F_\nu$ is continuous and monotone decreasing on
$\R\setminus(-1,1)$ with $\lim_{t\to\pm\infty}F_\nu(t)=0$. Then if
\eqref{eq:HUP} holds, we have
$$
F_\nu(1)-F_\nu(-1)
= \int_{-1}^1 \frac{2\dm\nu(t)}{1-t^2}
\leq 2,
$$
and so $e^{-i\pi F_\nu(t)}$ is univalent on $\R\setminus[-1,1]$. This proves
the sufficiency.

Suppose $\nu$ is even. Then it is straightforward to check that for all
$x,y\in\R$, $F_\nu(x+iy)=-\overline{F_\nu(-x+iy)}$. In particular, we have that
$F(1+iy)-F(-1+iy)$ is a real-valued continuous function of $y$ which tends
to $0$ as $y\to\infty$. Then if \eqref{eq:HUP} does not hold, so that
$F(1)-F(-1)>2$, there must exist some $y>0$ such that $F(1+iy)-F(-1+iy)=2$.
It follows that the Poisson extensions of $e^{i\pi t}$ and $e^{-i\pi F_\nu(t)}$
do not separate the points of $\C_+$ and hence the linear span of
$\{e^{im\pi t},\; e^{-in\pi F_\nu(t)}:\;m,n\in\Z\}$ is not weak-$^*$ dense
in $L^\infty(\R)$.
\end{proof}


\subsection{Structure of the paper}

In Section \ref{sec:prelim} we begin with some preliminary material on inner
functions and Aleksandrov-Clark measures. We will also introduce an important
operator $T_\vartheta$ associated to an inner function $\vartheta$. The proof
of Theorem \ref{thm:main} will rely on properties of a family of
\emph{generalised counting functions}, which are modelled on the classical
Nevanlinna counting function; these are introduced and studied in Section
\ref{sec:nevanlina}. The proof of Theorem \ref{thm:main} is then carried
out in Sections \ref{sec:transfer} and \ref{sec:span-L}. In Section
\ref{sec:transfer}, we introduce and describe a transfer operator $\bT$ whose
fixed points correspond to functions $f\in L^1(\T)$ satisfying
$\jap{f,\vartheta^m} = \jap{f,\varphi^n}=0$ for all $m,n\in\Z$. Finally, in
Section \ref{sec:span-L}, we complete the proof by combining the results of
Sections \ref{sec:nevanlina} and \ref{sec:transfer} to show that $\bT$ does
not possess any non-zero fixed points.

\subsection{Basic notation and conventions}

Here we fix some more notation and conventions that are used throughout the paper.

\begin{enumerate}[label=\arabic*., leftmargin=*, itemsep=5pt]
\item
The symbol ``$C$'' will denote a positive constant whose precise value may
change from one occurrence to another. Moreover, we write $f(z)\approx g(z)$
if $f(z)\leq Cg(z)$ and $g(z)\leq Cf(z)$ for all (appropriate) $z$, and write
$f(z)\sim g(z)$ as $z\to z_0$ if $\lim_{z\to z_0} f(z)/g(z) =1$. 
\item
The characteristic function of a set $E$ is denoted by $\1_E$, i.e. $\1_E(x)=1$
if $x\in E$ and $\1_E(x)=0$ if $x\notin E$. We simply write $\1$ for a function
which is identically equal to $1$ on its domain.
\item
We use $\C_+$ to denote the upper half plane $\{z\in\C:\Im z>0\}$. Moreover,
we set $\D_+=\D\cap\C_+$, $\;\D_-=\D\setminus\clos{\D_+}$, $\;\T_+=\T\cap\C_+$
and $\T_-=\T\setminus\clos{\T_+}$.
\item
If $E$ is a Lebesgue measurable subset of $\T$, we identify $L^1(E)$ with the
subspace of $L^1(\T)$ consisting of functions that vanish on $\T\setminus E$.
\end{enumerate}


\subsection*{Acknowledgements	}

We are grateful to Oleg Ivrii for suggesting the argument in the proof of
Lemma \ref{lem:counting}\ref{lem:counting2} which allowed us to remove a
restrictive technical assumption in the main theorem.


\section{Preliminaries}\label{sec:prelim}

\subsection{Inner functions}\label{subsec:inner}

Each inner function $\vartheta$ admits a canonical factorisation
$\vartheta= BS$, where
$$
B(z)= \alpha\prod_{j} \frac{\overline{z_j}}{|z_j|}
\frac{z-z_j}{1-\overline{z_j}z}
$$
is a Blaschke product determined by the zeros $\{z_j\}$ of $\vartheta$
and $\alpha\in\T$ (with the convention that $\overline{z_j}/|z_j|=1$
when $z_j=0$), and
$$
S(z)=\exp\Br{-\int_\T \frac{\zeta+z}{\zeta-z}\dm\mu(\zeta)}
$$
is a singular inner function determined by a positive, finite, singular
Borel measure $\mu$ on $\T$. 
In this case, the set
$$
\scrS(\vartheta)=\T\cap\clos{\{z_j\}\cup\supp\mu}
$$
is the intersection of all closed sets $E\subseteq\T$ such that $\vartheta$
extends analytically across $\T\setminus E$. We recall that the degree of
$\vartheta$, denoted $\deg\vartheta$, is $d$ if $\scrS(\vartheta)$ is empty
and $\vartheta$ has $d$ zeros, and equal to $\infty$ otherwise.

For $a\in\D$, let $\omega_a$ denote the automorphism of $\D$ given by
\[
\omega_a(z) = \frac{z-a}{1 - \wb{a}z}.
\label{eq:mobius}
\]
Then if $\vartheta$ is inner, $\omega_a\circ\vartheta$ is also inner.
The exceptional set of $\vartheta$ is the set
$$
\calE_\vartheta
=
\{a\in\D:\; \omega_a\circ\vartheta \;\text{is not a Blashke product}\}.
$$
A celebrated theorem of Frostman asserts that $\calE_\vartheta$ has logarithmic
capacity zero.

At each point $\zeta\in\T\setminus\scrS(\vartheta)$, the angular derivative
of $\vartheta$ -- that is, the derivative of $\arg\vartheta(\zeta)$ with respect
to $\arg\zeta$ -- is given by
\[
|\vartheta'(\zeta)|
=
\sum_{j}\frac{1-|z_j|^2}{|\zeta-z_j|^2} +
\int_\T \frac{2}{|\zeta-\eta|^2} \dm\mu(\eta).
\label{eq:derivative}
\]
We refer to \cite[\S 4.1]{Mashregi1} for the details. It follows that the
continuous branch of $\arg\vartheta(\zeta)$ is strictly increasing with
$\arg\zeta$ in any component of $\T\setminus\scrS(\vartheta)$, and in
particular, that for each $\zeta\in\T\setminus\scrS(\vartheta)$, $\vartheta$
is invertible on a neighbourhood of $\zeta$.

For $\zeta\in\T\setminus\scrS(\vartheta)$, a \emph{local invariant} for
$\vartheta$ (at $\zeta$) is a conformal map $\tau$ defined on some
neighbourhood $U$ of $\zeta$ such that $\vartheta(\tau(z))=\vartheta(z)$
for all $z\in U$
-- that is, $\tau$ is a branch of $\vartheta^{-1}\circ\vartheta$ defined on
a neighbourhood of $\zeta$ on which $\vartheta$ is univalent.
Observe that since $\vartheta$ is locally invertible at each point of
$\T\setminus\scrS(\vartheta)$, we have that for any $\zeta,\eta\in\T\setminus
\scrS(\vartheta)$ with $\vartheta(\zeta)=\vartheta(\eta)$, there is a
local invariant $\tau$ such that $\tau(\zeta)=\eta$.

\subsection{Aleksandrov-Clark measures}\label{subsec:AC-measures}

Let $\vartheta$ be an inner function. For $\alpha\in\T$, the function
$z\mapsto P(\vartheta(z),\alpha)$, where
$$
P(z, \zeta) = \frac{1-|z|^2}{|\zeta-z|^2}
$$
is the Poisson kernel, is a positive harmonic function on $\D$. Thus there
exists a (unique) positive Borel measure $\mu_\alpha$ such that
$$
P(\vartheta(z),\alpha)
=
\int_\T P(z, \zeta) \dm\mu_\alpha(\zeta).
$$
The measures $\{\mu_\alpha:\alpha\in\T\}$ are called the \emph{Aleksandrov-Clark
measures} (AC measures) for $\vartheta$. We mention that AC measures can be
defined in the same way for any function in the unit ball of $H^\infty(\T)$,
but we will only need to consider the case of inner functions in this paper.
We will make use of the following basic properties of AC measures
(see e.g. \cite{MaS2, Saksman1}):
\begin{enumerate}[label=(\arabic*), itemsep=5pt]
\item\label{prop:AC-cts}
The map $\alpha\mapsto \mu_\alpha$ is weak-$^*$ continuous.
\item\label{prop:AC-carrier}
The set $\vartheta^{-1}(\alpha)=\{\zeta\in\T:\lim_{r\to1^-}\vartheta(r\zeta)=\alpha\}$
is a carrier for $\mu_\alpha$, in the sense
that $\mu_\alpha(E)=\mu_\alpha(E\cap\vartheta^{-1}(\alpha))$ for every Borel
set $E\subseteq\T$.
\item\label{prop:AC-atom}
If $\vartheta$
is analytic at $\zeta\in\T$ and $\vartheta(\zeta)=\alpha$, then $\mu_\alpha$
will have an atom at $\zeta$ and $\mu_\alpha(\{\zeta\})=|\vartheta'(\zeta)|^{-1}$.
Consequently, if $\vartheta$ is a Blaschke product of degree $d$, then each
$\mu_\alpha$ will be a linear combination of $d$ distinct point masses.
\item\label{prop:AC-limts}
If $\vartheta$ is not a finite Blaschke product, then $\supp\mu_\alpha$ must
be infinite for each $\alpha\in\T$. It follows from \ref{prop:AC-carrier} that
the limit points of $\supp\mu_\alpha$ must be contained in $\scrS(\vartheta)$.
\end{enumerate}
Many of the deep applications of AC measures are a consequence of the
Aleksandrov disintegration theorem which states that for $f\in L^1(\T)$,
one has that $f\in L^1(\mu_\alpha)$ for almost every $\alpha\in\T$ and
$$
\int_\T \int_\T f(\zeta) \dm\mu_\alpha(\zeta) \dm\alpha
=
\int_\T f(\zeta) \dm\zeta.
$$

\subsection{The Operators $T_\vartheta$}\label{subsec:expectation}

Let $\vartheta$ be an inner function such that $\vartheta(0)=0$ and let
$\{\mu_\alpha:\alpha\in\T\}$ be the AC measures for $\vartheta$. The
orthogonal projection $P_\vartheta$ from $L^2(\T)$ onto the closure of
$\C[\vartheta,\overline{\vartheta}]$ is given by
\[
P_\vartheta f(\zeta)
=
\int_\T f \dm\mu_{\vartheta(\zeta)}.
\label{eq:ac-formula}
\]
The formula \eqref{eq:ac-formula} can be found in, for example, \cite{MaS2}.
Moreover, $P_\vartheta$ extends to a norm $1$ projection from $L^p(\T)$ onto
the closure of $\C[\vartheta,\overline{\vartheta}]$ for every $1\leq p\leq\infty$, 
where the closure is taken with respect to the $L^p$ norm when $p<\infty$
and the weak-$^*$ topology when $p=\infty$. We remark that $P_\vartheta$ is
precisely the conditional expectation corresponding to the $\sigma$-algebra
generated by $\vartheta$.

Suppose $\scrS(\vartheta)\neq\T$. Then for $f\in L^1(\T)$, we define
the function $T_\vartheta f$ by setting
\[
T_{\vartheta} f(\zeta)
=
|\vartheta'(\zeta)|P_\vartheta f(\zeta) - f(\zeta),
\quad
\zeta\in\T\setminus\scrS(\vartheta).
\label{eq:T-op}
\]
It's clear that if $f$ satisfies $\jap{f,\vartheta^n}=0$ for all $n\in\Z$,
then $T_\vartheta f = - f$ almost everywhere on $\T\setminus\scrS(\vartheta)$.
Let $I$ be an arc of $\T$ on which $\vartheta$ is univalent. Then for
$\zeta\in I$, we have that
\[
T_\vartheta f(\zeta)
= |\vartheta'(\zeta)|\int_\T f \dm\mu_{\vartheta(\zeta)} - f(\zeta)
= |\vartheta'(\zeta)|\int_{\T\setminus I} f \dm\mu_{\vartheta(\zeta)}.
\label{eq:T-op-AC}
\]
In particular, this shows that $T_\vartheta f = T_\vartheta(f\1_{\T\setminus I})$
on $I$.

The following Lemma summarizes the basic properties of $T_\vartheta$ that
we need.
\begin{lemma}\label{lem:T-prop}
Let $\vartheta$ be an inner function such that $\vartheta(0)=0$ and
$\scrS(\vartheta)\neq\T$. Let $I$ be an arc of $\T\setminus\scrS(\vartheta)$
on which $\vartheta$ is univalent. Then for each $f\in L^1(\T\setminus I)$,
the following hold:
\begin{enumerate}[label=(\alph*), itemsep=5pt, topsep=3pt]
\item\label{lem:T-prop:triangle}
$|T_\vartheta f|\leq T_\vartheta(|f|)$;
\item\label{lem:T-prop:positive}
if $f\geq0$, then $T_\vartheta f\geq0$;
\item\label{lem:T-prop:cts}
if $f\in C(\clos{\T\setminus I})$, then $T_\vartheta f\in C(\clos{I})$;
\item\label{lem:T-prop:contraction}
$\norm{T_\vartheta f}_{L^1(I)}\leq\norm{f}_{L^1(\T\setminus I)}$;
\item\label{lem:T-prop:adjoint}
if $E\subseteq I$ is Lebesgue measurable, then
$\jap{T_\vartheta f,\1_E}=\jap{f,\1_{\vartheta^{-1}(\vartheta(E))\setminus E}}$.
\end{enumerate}
\end{lemma}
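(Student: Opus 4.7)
The plan is to prove all five parts by direct computation using the representation \eqref{eq:T-op-AC} of $T_\vartheta$ on $I$, namely $T_\vartheta f(\zeta) = |\vartheta'(\zeta)| \int_{\T \setminus I} f \, d\mu_{\vartheta(\zeta)}$, combined with the Aleksandrov disintegration theorem and the basic AC measure properties \ref{prop:AC-cts}--\ref{prop:AC-limts}. The structural fact underlying everything is that, since $\vartheta$ is univalent on $I$, $\vartheta^{-1}(\vartheta(\zeta)) \cap I = \{\zeta\}$ for each $\zeta \in I$; combined with \ref{prop:AC-carrier} and \ref{prop:AC-atom}, this forces the restriction $\mu_{\vartheta(\zeta)}|_I$ to be the single atom $|\vartheta'(\zeta)|^{-1}\delta_\zeta$.

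For parts \ref{lem:T-prop:triangle} and \ref{lem:T-prop:positive}, I would read off from \eqref{eq:T-op-AC} that $T_\vartheta f$ is the integral of $f|_{\T \setminus I}$ against the positive measure $|\vartheta'(\zeta)| \mu_{\vartheta(\zeta)}$, so positivity is immediate and the triangle inequality is the standard one for integrals. For part \ref{lem:T-prop:cts}, I would extend $f \in C(\clos{\T\setminus I})$ to some $g \in C(\T)$ and use the identification of $\mu_{\vartheta(\zeta)}|_I$ above to rewrite $T_\vartheta f(\zeta) = |\vartheta'(\zeta)| \int_\T g\, d\mu_{\vartheta(\zeta)} - g(\zeta)$ for $\zeta \in I$. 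Continuity then follows from continuity of $|\vartheta'|$ on $I$ (via formula \eqref{eq:derivative}), weak-$^*$ continuity of $\alpha \mapsto \mu_\alpha$ (property \ref{prop:AC-cts}), and continuity of $g$ and $\vartheta$, with the extension up to the endpoints of $I$ handled by the standing continuity hypothesis on $\vartheta$ there.

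For parts \ref{lem:T-prop:contraction} and \ref{lem:T-prop:adjoint}, the common tool is a change of variables $\alpha = \vartheta(\zeta)$ on $I$, valid since $\vartheta|_I$ is a homeomorphism onto its image with $d\alpha = |\vartheta'(\zeta)|\,d\zeta$. For the contraction bound, I would reduce to $f \geq 0$ via part \ref{lem:T-prop:triangle}, then convert $\norm{T_\vartheta f}_{L^1(I)}$ to $\int_{\vartheta(I)} \int_{\T\setminus I} f \, d\mu_\alpha \, d\alpha$, which is bounded by $\int_\T \int_\T f\1_{\T\setminus I} \, d\mu_\alpha \, d\alpha = \norm{f}_{L^1(\T\setminus I)}$ by Aleksandrov disintegration. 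For the adjoint formula, the same change of variables rewrites $\jap{T_\vartheta f, \1_E}$ as $\int_{\vartheta(E)} \int_{\T\setminus I} f\, d\mu_\alpha\, d\alpha$; using that $\mu_\alpha$ is carried by $\vartheta^{-1}(\alpha)$ I replace $\1_{\vartheta(E)}(\alpha)$ by $\1_{\vartheta^{-1}(\vartheta(E))}(\zeta)$ inside the inner integral, apply disintegration, and obtain $\int_\T f(\zeta)\,\1_{\T\setminus I}(\zeta)\,\1_{\vartheta^{-1}(\vartheta(E))}(\zeta)\, d\zeta$. The final step is the set-theoretic identity $\1_{\T\setminus I}\cdot\1_{\vartheta^{-1}(\vartheta(E))} = \1_{\vartheta^{-1}(\vartheta(E))\setminus E}$, which follows from $\vartheta^{-1}(\vartheta(E)) \cap I = E$ (by univalence and $E \subseteq I$).

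I do not anticipate any serious obstacle: each part is a short computation once \eqref{eq:T-op-AC} is in hand. The most delicate point is the continuity claim in part \ref{lem:T-prop:cts} at the endpoints of $I$, where one must invoke continuity of $\vartheta$ there to guarantee that both $|\vartheta'|$ (through \eqref{eq:derivative}) and the weak-$^*$ continuous map $\alpha \mapsto \mu_\alpha$ extend appropriately; away from the endpoints everything reduces to the disintegration formula plus the univalence-driven identity $\vartheta^{-1}(\vartheta(E)) \cap I = E$.
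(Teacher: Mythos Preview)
Your proposal is correct and follows essentially the same route as the paper: the representation \eqref{eq:T-op-AC}, weak-$^*$ continuity of the AC measures, and the Aleksandrov disintegration theorem. The only organizational difference is in part \ref{lem:T-prop:adjoint}, where the paper moves the pairing to the other side via self-adjointness of $P_\vartheta$ and computes $P_\vartheta(|\vartheta'|\1_E)=\1_{\vartheta^{-1}(\vartheta(E))}$ from the atom structure, whereas you change variables $\alpha=\vartheta(\zeta)$ directly and then disintegrate; these are the same calculation in dual form.
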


\begin{proof}
\ref{lem:T-prop:triangle} and \ref{lem:T-prop:positive} are obvious, and
\ref{lem:T-prop:cts} follows from the weak$^*$ continuity of the function
$\zeta\mapsto |\vartheta'(\zeta)|(\mu_{\vartheta(\zeta)}-\delta_\zeta)$ on
$\clos{I}$.

To see \ref{lem:T-prop:contraction}, we compute that
$$
\int_I |T_\vartheta f(\zeta)| \dm\zeta
\leq
\int_I|\vartheta'(\zeta)|
\int_{\T\setminus I} |f| \dm\mu_{\vartheta(\zeta)}\dm\zeta
\leq
\int_{\T}
\int_{\T\setminus I} |f| \dm\mu_{\zeta}\dm\zeta.
$$
The conclusion now follows from the Aleksandrov disintegration theorem.

To prove \ref{lem:T-prop:adjoint}, we begin by observing that since
$E\subseteq I$, $\jap{f,\1_E}=0$, and so
$$
\jap{T_\vartheta f,\1_E}
=
\jap{|\vartheta'|P_\vartheta f,\1_E}
=
\jap{f,P_\vartheta (|\vartheta'|\1_E)}.
$$
Then from \eqref{eq:ac-formula} we have
$$
P_\vartheta (|\vartheta'|\1_E)(\zeta)
=
\int_E |\vartheta'| \dm\mu_{\vartheta(\zeta)}.
$$
Since $\vartheta$ is univalent on $E$, we see that
$\mu_{\vartheta(\zeta)}|_E \neq 0$ if and only if there is some $\eta\in E$
such that $\vartheta(\eta)=\vartheta(\zeta)$; in this case
$$
\mu_{\vartheta(\zeta)}|_E = \frac{1}{|\vartheta'(\eta)|}\delta_\eta,
$$
and so $P_\vartheta (|\vartheta'|\1_E)(\zeta)=1$. We conclude that
$P_\vartheta (|\vartheta'|\1_E)= \1_{\vartheta^{-1}(\vartheta(E))}$, from
which the result follows since $\jap{f,\1_E}=0$.
\end{proof}

\begin{example}\label{ex:blaschke}
Let $B$ be a finite Blaschke product of degree $d$. The work of Cassier and
Chalendar \cite{CaC1} shows that there exists a collection $G_B$ of homeomorphisms
of $\T$ with the following properties:
\begin{enumerate}[label=(\arabic*), itemsep=5pt]
\item\label{ex:Blaschke:group}
$G_B$ is a cyclic group (under composition) of order $d$;
\item\label{ex:Blaschke:analytic}
each $g\in G_B$ extends analytically to a neighbourhood of $\T$;
\item\label{ex:Blaschke:invariant}
each $g\in G_B$ satisfies $B(g(\zeta)) = B(\zeta)$ for all $\zeta\in\T$;
\item\label{ex:Blaschke:orbit}
for each $\zeta\in\T$, $B^{-1}(B(\zeta))=\{g(\zeta):g\in G_B\}$.
\end{enumerate}
We call $G_B$ the \emph{group of invariants} of $B$. For $\zeta\in\T$ and
$g\in G_B$ we have $B'(g(\zeta))=g'(\zeta)/B'(\zeta)$, and so
if $\mu_{B(\zeta)}$ is the AC measure for $B$ corresponding to $B(\zeta)$, then
$$
\mu_{B(\zeta)}
=
\frac{1}{|B'(\zeta)|}\sum_{g\in G_B} |g'(\zeta)|\delta_{g(\zeta)}.
$$
Consequently, for $f\in L^1(\T)$ we have
\[
T_B f(\zeta)
=
\sum_{g\in G_B\setminus\{\Id_\T\}} |g'(\zeta)|f(g(\zeta)).
\label{eq:T-blaschke}
\]
\end{example}


\section{Nevanlinna-type counting functions}\label{sec:nevanlina}

\subsection{Nevanlinna counting function}\label{subsec:counting}

Let $\phi:\D\to\D$ be a non-constant analytic function. Its \emph{Nevanlinna
counting function} $N_\phi$ is defined on $\D\setminus\{\phi(0)\}$ by
$$
N_\phi(z) = \sum_{w\in\phi^{-1}(z)}
-\log|w|,
$$
where each $w\in\phi^{-1}(z)$ is repeated according to its multiplicity
and an empty sum is defined to be zero. The Nevanlinna counting function
is a classical tool in value distribution theory and the study of composition
operators. An important property of $N_\phi$ is Littlewood's estimate
(see e.g. \cite[\S 4.2]{sha2}):
\[
N_\phi(z) \leq \log\Abs{\frac{1-\wb{z}\phi(0)}{z-\phi(0)}}.
\label{eq:littlewood}
\]
In particular, if $\phi(0)=0$ then $N_\phi(z) \leq -\log|z|$.
In \cite[\S 4.2]{sha2} it was shown that $\phi$ is inner if and only if there
is equality in \eqref{eq:littlewood} at some point in $\D$, and in this case
there is necessarily equality at all $z\in\D\setminus\calE_\phi$.

There is a close connection between the Nevanlinna counting function and
AC measures. This is most definitively demonstrated is the work of Nieminen
and Saksman \cite{NiS1} in which they introduced a measure-valued refinement
of $N_\phi$ and showed that the AC measures arise as boundary values of the
refined counting function. Among other things, they proved the following
\cite[Cor. 5.2, Prop. 7.1]{NiS1}:

\begin{theorem}[Nieminen--Saksman]\label{thm:Niem-Saks}
Let $\vartheta$ be an inner function and let $\{\mu_\alpha:\alpha\in\T\}$ be
the AC measures for $\vartheta$. Then there is a set
$E_1\subseteq\D$ of logarithmic capacity zero such that for each $\alpha\in\T$
and each $f\in C(\clos{\D})$, we have
$$
\frac{1}{-\log|z|}\sum_{w\in\vartheta^{-1}(z)}-f(w)\log|w|
\to
\int_\T f \dm\mu_\alpha
\quad
\text{as}\;\; z\to\alpha\;\;\text{in}\;\;\D\setminus E_1.
$$
\end{theorem}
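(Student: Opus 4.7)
The plan is to study the measure-valued refinement of the Nevanlinna counting function,
$$
\nu_z := \frac{1}{-\log|z|}\sum_{w\in\vartheta^{-1}(z)}(-\log|w|)\,\delta_w,
$$
regarded as a Borel measure on $\clos{\D}$, and to show that $\nu_z\rightharpoonup\mu_\alpha$ in the weak-$^*$ sense as $z\to\alpha$ along $\D\setminus E_1$; evaluating against $f\in C(\clos\D)$ then yields the claimed convergence. One can reduce to $\vartheta(0)=0$, since otherwise the normalising factor $-\log|z|$ should be replaced by the comparable quantity $\log\bigl|(1-\bar z\vartheta(0))/(z-\vartheta(0))\bigr|$ appearing in Littlewood's estimate \eqref{eq:littlewood}. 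That estimate makes $\nu_z$ a sub-probability measure on $\clos\D$, and since $\vartheta$ is inner one has equality in \eqref{eq:littlewood} off the Frostman exceptional set $\calE_\vartheta$, so $\nu_z$ is in fact a probability measure for every $z\in\D\setminus\calE_\vartheta$. As $\calE_\vartheta$ has logarithmic capacity zero, this both confirms convergence against $f\equiv 1$ and contributes the first piece of the required exceptional set $E_1$.

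For non-constant $f$, the uniform total-mass bound reduces the problem to verifying weak-$^*$ convergence against a dense family in $C(\clos\D)$. A convenient choice is the Cauchy-type kernels $k_\eta(w)=(1-\bar\eta w)^{-1}$ with $\eta$ in a countable dense subset of $\D$, together with their complex conjugates. For each such $\eta$, I would compute $\int k_\eta\,d\nu_z$ in closed form by applying Jensen's formula to the auxiliary inner function $\omega_{\vartheta(\eta)}\circ\vartheta$ at $z$: modulo a contribution that vanishes for $z$ outside the Frostman set $\calE_{\omega_{\vartheta(\eta)}\circ\vartheta}$, the sum over $w\in\vartheta^{-1}(z)$ collapses to a rational expression in $z$ and $\eta$. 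Its boundary limit as $z\to\alpha$ is then identified with $\int_\T k_\eta\,d\mu_\alpha$ through the defining Poisson relation $P(\vartheta(\eta),\alpha)=\int_\T P(\eta,\zeta)\,d\mu_\alpha(\zeta)$ for the AC measure.

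The exceptional set $E_1$ is assembled as the union of $\calE_\vartheta$ together with the sets $\calE_{\omega_{\vartheta(\eta)}\circ\vartheta}$ taken over $\eta$ in the countable dense family; subadditivity of logarithmic capacity preserves the capacity-zero property, and by construction $E_1$ is independent of both $\alpha$ and $f$. A standard $\eps/3$ argument, using the uniform bound $\nu_z(\clos\D)\le 1$, then extends the convergence from the dense subfamily to all $f\in C(\clos\D)$. The main obstacle is the Jensen-type step in the second paragraph: producing a genuinely closed-form expression for the weighted sum rather than merely an asymptotic bound requires careful bookkeeping of the Blaschke zeros and the singular part of $\vartheta$. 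The cleanest route I see is to verify the identity first for finite Blaschke products --- where Example \ref{ex:blaschke} reduces matters to an elementary computation using the group of invariants --- and then pass to a general inner function via a Frostman shift $\omega_c\circ\vartheta$, choosing $c\in\D\setminus\calE_\vartheta$ so that the shifted function is again a Blaschke product and dominated-convergence-type arguments may be invoked as $c\to 0$.
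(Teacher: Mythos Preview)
The paper does not prove Theorem~\ref{thm:Niem-Saks}; it is quoted from \cite[Cor.~5.2, Prop.~7.1]{NiS1} and used as a black box, so there is no proof here to compare against. Your overall architecture --- treat $\nu_z$ as a sub-probability measure via Littlewood's inequality, test against a countable family to pin down the weak-$^*$ limit, and build $E_1$ from Frostman-type exceptional sets --- is in fact the Nieminen--Saksman strategy.

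There is, however, a genuine gap in your second paragraph. The Cauchy kernels $k_\eta(w)=(1-\bar\eta w)^{-1}$ do not interact with the weights $-\log|w|$: the sum $\sum_{w\in\vartheta^{-1}(z)}(-\log|w|)/(1-\bar\eta w)$ has no closed form that I can see, and Jensen's formula for $\omega_{\vartheta(\eta)}\circ\vartheta$ cannot help, because the zeros of that function are $\vartheta^{-1}(\vartheta(\eta))$, not $\vartheta^{-1}(z)$. The auxiliary function you want involves \emph{pre}composition, not postcomposition.

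The test functions that make the computation collapse are
\[
g_\eta(w)=\frac{-\log|\omega_\eta(w)|}{-\log|w|},
\]
which extend continuously to $\T$ with boundary values $P(\eta,\cdot)$. The weight cancels and one obtains
\[
\int g_\eta\,d\nu_z
=\frac{1}{-\log|z|}\sum_{w\in\vartheta^{-1}(z)}-\log|\omega_\eta(w)|
=\frac{N_{\vartheta\circ\omega_{-\eta}}(z)}{-\log|z|}.
\]
Since $\calE_{\vartheta\circ\omega_{-\eta}}=\calE_\vartheta$, equality in \eqref{eq:littlewood} gives $N_{\vartheta\circ\omega_{-\eta}}(z)=-\log|\omega_{\vartheta(\eta)}(z)|$ for $z\notin\calE_\vartheta$, and then
\[
\frac{-\log|\omega_{\vartheta(\eta)}(z)|}{-\log|z|}\longrightarrow P(\vartheta(\eta),\alpha)=\int_\T P(\eta,\zeta)\,d\mu_\alpha(\zeta)
\quad\text{as }z\to\alpha.
\]
Because $\nu_z(K)=0$ for every compact $K\subset\D$ once $z$ is close enough to $\T$, any weak-$^*$ limit is supported on $\T$, and density of $\{P(\eta,\cdot):\eta\in\D\}$ in $C(\T)$ finishes the job. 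With this correction a single exceptional set $E_1=\calE_\vartheta$ suffices; the countable union and the finite-Blaschke reduction in your last paragraph are not needed.
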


\subsection{Generalized counting functions}

Let $u$ be a non-negative function on $\D$ which is harmonic on the complement
of a (possibly empty) discrete set of poles. For such a $u$  and $\zeta\in\T$,
we set
$$
\partial u(\zeta) = \lim_{z\to\zeta}\frac{u(z)}{-\log|z|},
$$
whenever this limit exists. In particular, if $u$ has no poles in $\D_+$
(or $\D_-$), and $u(z)\to0$ as $|z|\to1$ in $\D_+$ (resp. $\D_-$), then
$\partial u$ is well-defined and continuous on $\T_+$ (resp. $\T_-$).

Let $\vartheta$ be an inner function and let $u$ be as above. We define the
generalized counting function $\calN_{\vartheta, u}$ on $\D$ by
$$
\calN_{\vartheta, u}(z)
=\sum_{\substack{w\in\vartheta^{-1}(\vartheta(z)) \\ w\neq z}} u(w).
$$
Note that here the sum is over the `orbit'
$\vartheta^{-1}(\vartheta(z))\setminus\{z\}$ rather than the preimages
$\vartheta^{-1}(z)$; this will be slightly more convenient for our purposes.

The main result of this section is the following lemma, which will be a key
ingredient in the proof of Theorem \ref{thm:main}.

\begin{lemma}\label{lem:counting}
Let $S$ be a discrete set in $\D\setminus\D_-$ and let $u:\D\setminus S\to[0,\infty)$ be
harmonic. Assume further that for $z\in\D_-$,
$$
u(z)\leq -C\log|z|.
$$
Let $\vartheta$ be an inner function with $\vartheta(0)=0$ and
$\deg\vartheta\geq3$. Suppose that $\vartheta$ is univalent on $\T_+$ and
continuous at $\pm1$.
Then, possibly after modifying $\calN_{\vartheta,u}$ on a set of logarithmic
capacity zero, the following holds:
\begin{enumerate}[label=(\alph*), itemsep=5pt]
\item\label{lem:counting1}
for $z\in\D_+$, we have $\calN_{\vartheta,u}(z)\leq -C\log|z|$;
\item\label{lem:counting2}
$\calN_{\vartheta,u}$ is harmonic on $\D\setminus S'$, where
$S'=\vartheta^{-1}(\vartheta(S\cup\{0\}))\cap\clos{\D_-}$;
\item\label{lem:counting3}
$\partial\calN_{\vartheta, u}(\zeta)=T_\vartheta \partial u(\zeta)$
for all $\zeta\in\T_+$, where $T_\vartheta$ is defined by \eqref{eq:T-op}.
\end{enumerate}
\end{lemma}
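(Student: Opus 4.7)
Proof proposal. I plan to establish the three parts in the order (b), (a), (c): part (b) provides the analytic framework, (a) follows from a maximum-principle argument on $\D_+$ using (b), and (c) is a boundary-limit computation in the spirit of the Nieminen--Saksman theorem (Theorem~\ref{thm:Niem-Saks}).

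For (b) I would work locally. Fix $z_0\in\D\setminus S'$ and parametrize the orbit $\vartheta^{-1}(\vartheta(z))\setminus\{z\}$ near $z_0$ by finitely many local invariants $\tau_j$ -- biholomorphic branches of $\vartheta^{-1}\circ\vartheta$. Then, on a neighborhood of $z_0$,
\[
\calN_{\vartheta,u}(z) = \sum_j u(\tau_j(z)),
\]
a finite sum of compositions of the harmonic function $u$ with holomorphic maps, hence harmonic wherever no $\tau_j(z)$ lies in $S$. Critical points of $\vartheta$ cause branches to coalesce, but $\calN_{\vartheta,u}$ remains a symmetric, hence continuous, function of the preimages, and extends harmonically across these isolated points by the removable-singularity theorem. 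The genuine obstructions are points where $\tau_j(z_0)\in S\cup\{0\}$ for some $j$, i.e., $z_0\in\vartheta^{-1}(\vartheta(S\cup\{0\}))\setminus(S\cup\{0\})$, at which $u\circ\tau_j$ inherits a logarithmic singularity. The crux -- attributed to Ivrii in the acknowledgments -- is to show that under the given hypotheses any such bad point lying outside $\overline{\D_-}$ must in fact coincide with an element of $S\cup\{0\}$ itself, and so be harmless since the defining sum already excludes $w=z$. Heuristically, the univalence of $\vartheta$ on $\T_+$ together with continuity at $\pm1$ prevents the orbit of any $s\in S\cup\{0\}\subseteq\D\setminus\D_-$ from reaching $\D_+\cup(-1,1)$ at a point other than $s$, confining the singular set of $\calN_{\vartheta,u}$ to $S'$.

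For (a) I would apply the maximum principle on $\D_+$ to $V(z)=-C\log|z|-\calN_{\vartheta,u}(z)$, which is harmonic on $\D_+$ by (b). On $\T_+$: as $z\to\zeta\in\T_+$, each non-trivial $\tau_j(z)$ approaches a point $\eta_j\in\overline{\T_-}$ (by univalence of $\vartheta$ on $\T_+$); since $0\leq u\leq -C\log|\cdot|$ on $\D_-$ forces the non-tangential boundary value of $u$ to vanish a.e. on $\T_-$, one obtains $\calN_{\vartheta,u}(z)\to 0$ and hence $V(z)\to 0$. On $[-1,1]$: the pole of $-C\log|z|$ at $0$ makes $V\to+\infty$ there, while at $x\in(-1,1)\setminus\{0\}$ the required bound $\calN_{\vartheta,u}(x)\leq -C\log|x|$ would be obtained by a parallel argument in $\D_-$ (where $u\leq -C\log|\cdot|$ directly and Littlewood's identity $\sum_{w\in\vartheta^{-1}(\vartheta(z))}(-\log|w|) = -\log|\vartheta(z)|$ controls the counting sum), combined with continuity of $\calN_{\vartheta,u}$ across the real segment. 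The maximum principle, after removing the capacity-zero set on which $V=+\infty$, then yields $V\geq 0$ on $\D_+$.

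For (c) I would compute the radial limit directly. Fix $\zeta\in\T_+$ and take $z\to\zeta$ radially. Each $\tau_j(z)\to\eta_j\in\overline{\T_-}$ with $\vartheta(\eta_j)=\vartheta(\zeta)$; the standard angular-derivative asymptotics $(1-|\tau_j(z)|)|\vartheta'(\eta_j)|\sim(1-|z|)|\vartheta'(\zeta)|$ give $(-\log|\tau_j(z)|)/(-\log|z|)\to |\vartheta'(\zeta)|/|\vartheta'(\eta_j)|$, while $u(\tau_j(z))/(-\log|\tau_j(z)|)\to\partial u(\eta_j)$ by hypothesis. Multiplying yields $u(\tau_j(z))/(-\log|z|)\to|\vartheta'(\zeta)|\,\partial u(\eta_j)/|\vartheta'(\eta_j)|$ term by term, and summing -- invoking Theorem~\ref{thm:Niem-Saks} to justify exchanging limit and summation when $\vartheta$ has infinite degree -- gives
\[
\partial\calN_{\vartheta,u}(\zeta) = |\vartheta'(\zeta)|\int_{\T_-}\partial u\,\dm\mu_{\vartheta(\zeta)} = T_\vartheta\,\partial u(\zeta),
\]
the last equality following from \eqref{eq:T-op-AC} applied with $I=\T_+$. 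The principal obstacle throughout is the Ivrii-type step in (b): translating the one-dimensional boundary hypothesis of univalence on $\T_+$ into the two-dimensional non-intrusion statement for the orbits $\vartheta^{-1}(\vartheta(s))$ into $\D_+\cup(-1,1)$. Without it, the singular set of $\calN_{\vartheta,u}$ would leak into $\D_+$ and both the maximum-principle argument for (a) and the boundary computation for (c) would collapse.
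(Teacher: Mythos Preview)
There is a real gap in your plan for (b) when $\deg\vartheta=\infty$. You write ``parametrize the orbit \dots\ by finitely many local invariants $\tau_j$'', but for an inner function of infinite degree the orbit $\vartheta^{-1}(\vartheta(z))$ is infinite, the $\tau_j$ are countably many, and nothing in your sketch controls the tail of $\sum_j u(\tau_j(z))$ or establishes local uniform convergence. The paper does not work with infinitely many local invariants. Instead, after a Frostman reduction to the Blaschke case, it approximates $\vartheta$ by its finite partial products $\vartheta_j$, proves a uniform bound $\calN_{\vartheta_j,u}\leq C(1-\log|\vartheta_j|)$ on compact sets in $\D\setminus S'$ (using Lemma~\ref{lem:univalence} to ensure at most one orbit point lies in $\D_+$), extracts a harmonic subsequential limit by normal families, and then identifies that limit with $\calN_{\vartheta,u}$ off a capacity-zero set via Lemma~\ref{lem:cf-convergence}. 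This approximation-and-compactness step is the argument the acknowledgments attribute to Ivrii; the orbit-non-intrusion statement you label the ``Ivrii step'' is instead the separate preliminary Lemma~\ref{lem:univalence}, proved by an elementary winding-number computation.

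Your route to (a) via the maximum principle is workable but circuitous: to bound $V=-C\log|\cdot|-\calN_{\vartheta,u}$ from below on the segment $(-1,1)$ you invoke Littlewood's estimate, but that estimate already yields $\calN_{\vartheta,u}(z)\leq C\,N_\vartheta(\vartheta(z))\leq -C\log|\vartheta(z)|$ for \emph{every} $z\in\D_+\cup(-1,1)$ directly, since all orbit points other than $z$ lie in $\D_-$ (Lemma~\ref{lem:univalence}) and hence satisfy $u(w)\leq -C\log|w|$ termwise. Combining this with Lemma~\ref{lem:log-equiv}, which gives $-\log|\vartheta(z)|\approx-\log|z|$ on $\D_+$, is the paper's three-line proof of (a); it needs neither (b) nor any boundary analysis. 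Your sketch of (c) is essentially the paper's: there one extends $(-\log|\cdot|)^{-1}u$ continuously to $\clos{\D}$ and applies Theorem~\ref{thm:Niem-Saks} once, together with Julia--Carath\'eodory for the factor $|\vartheta'(\zeta)|$.
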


\begin{remark}
It's clear that we can interchange $\D_+$ and $\T_+$ with $\D_-$ and $\T_-$
respectively in Lemma \ref{lem:counting}, and indeed in all subsequent results
in this Section, and the result will remain true. 
\end{remark}

In order to prove Lemma \ref{lem:counting} we will need three preliminary results.

\begin{lemma}\label{lem:univalence}
Let $\vartheta$ be an inner function which is univalent on $\T_+$. Then
$\vartheta$ is univalent on $\D_+$. If $\deg\vartheta\geq 3$, it is even
univalent across $(-1,1)$.
\end{lemma}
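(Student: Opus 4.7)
The plan is to apply the argument principle to $\vartheta$ on the Jordan domain $\D_+$, whose boundary consists of the arc $\T_+$ and the segment $[-1,1]$. Two observations set the stage:
\begin{enumerate}
\item On $\T_+$ the function $\vartheta$ is univalent by hypothesis, so $A:=\vartheta(\T_+)$ is a simple open arc of $\T$ of angular length at most $2\pi$.
\item On $[-1,1]\subset\D$ the image $B:=\vartheta([-1,1])$ lies in the open disk $\D$ by the maximum modulus principle, since $\vartheta$ is non-constant with $|\vartheta|\le 1$ on $\clos\D$.
\end{enumerate}
Thus $\gamma:=\vartheta(\partial\D_+)$ is the concatenation $A\cdot B$, and for a generic $w_0\in\D$ the number of preimages of $w_0$ in $\D_+$ equals the winding number $n(\gamma,w_0)$.

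For part (1), I would compare $\gamma$ with $\T$ traversed once counterclockwise. Writing $\T=A\cdot A^c$ for the appropriately oriented complementary arc $A^c$, we obtain
\[
n(\gamma,w_0)=n(\T,w_0)+n(B\cdot(A^c)^{-1},w_0)=1+n(B\cdot(A^c)^{-1},w_0).
\]
The essential step is to show that $n(B\cdot(A^c)^{-1},w_0)\le 0$ for every $w_0\in\D$. The closed curve $B\cdot(A^c)^{-1}$ lies in $\clos\D$, and since the arc $(A^c)^{-1}$ is traversed clockwise on $\T$ (so that $\D$ sits on its right), this loop bounds (with appropriate signed multiplicity) a region of $\clos\D$ to its right; the desired bound then follows from elementary winding-number considerations. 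This yields $n(\gamma,w_0)\le 1$, hence univalence of $\vartheta$ on $\D_+$.

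For part (2), with $\deg\vartheta\ge 3$, I would first reduce to the case of a finite Blaschke product via a Frostman shift $B_a:=\omega_a\circ\vartheta$, $a\notin\calE_\vartheta$; composition with the M\"obius map $\omega_a$ preserves univalence on $\T_+$ and $\D_+$. For a finite Blaschke product of degree $d\ge 3$, the Cassier--Chalendar group $G_{B_a}$ from Example \ref{ex:blaschke} is cyclic of order $d$, and univalence on $\T_+$ forces every non-identity $g\in G_{B_a}$ to send $\T_+$ into $\clos{\T_-}$; being a M\"obius automorphism of $\D$, $g$ then also sends $\D_+$ into $\clos{\D_-}$. A key additional observation, needed precisely because $d\ge 3$, is that each non-identity $g\in G_{B_a}$ maps $\pm 1$ into $\T_-$ strictly (otherwise $B_a$ would factor through $\phi^d$ for a real M\"obius $\phi$, forcing $d\le 2$); consequently the chord $g([-1,1])$ lies in $\clos{\D_-}$ and is disjoint from $(-1,1)$, so the orbits of $\D_+\cup(-1,1)$ are mutually disjoint under the $G_{B_a}$-action. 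This gives univalence across $(-1,1)$. The infinite-degree case follows by approximation.

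The main obstacle is the rigorous winding-number bound $n(B\cdot(A^c)^{-1},w_0)\le 0$ in part (1), since the curve $B$ may be self-intersecting and have complicated behavior near points of $\scrS(\vartheta)\cap\{\pm 1\}$. A secondary difficulty is extending the Cassier--Chalendar-based argument to inner functions of infinite degree, which requires a careful approximation argument since no single-valued group of invariants is generally available.
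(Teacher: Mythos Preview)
Your approach has two genuine gaps.

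For part (1), the bound $n(B\cdot(A^c)^{-1},w_0)\le 0$ is a restatement of the conclusion rather than a step toward it: by your own decomposition, $\#\bigl(\vartheta^{-1}(w_0)\cap\D_+\bigr)=1+n(B\cdot(A^c)^{-1},w_0)$, so this bound says precisely that $\vartheta$ takes each value at most once in $\D_+$. The justification you offer (``the loop bounds a region to its right'') uses only that $B\subset\D$ and that $(A^c)^{-1}$ runs clockwise on $\T$, but a curve in $\D$ joining two boundary points can wind counterclockwise around an interior point arbitrarily often, so no purely topological argument of this kind can work; some analytic input specific to $\vartheta$ is needed. There is also the obstacle you flag but do not resolve: the lemma does not assume continuity at $\pm 1$, and if $\pm 1\in\scrS(\vartheta)$ the boundary image is not a closed curve and the argument principle does not apply as stated.

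For part (2), the assertion that each non-identity $g\in G_{B_a}$ is a M\"obius automorphism of $\D$ is false. The Cassier--Chalendar invariants are analytic homeomorphisms of $\T$ extending only to a neighbourhood of $\T$; for a generic finite Blaschke product they are not linear fractional and are not defined on all of $\D$, so $g(\T_+)\subset\clos{\T_-}$ tells you nothing about $g(\D_+)$ or $g([-1,1])$. The paper avoids both difficulties with a short direct estimate: if (after a Frostman shift) $\vartheta$ had two zeros $z_1,z_2\in\D_+$, the angular-derivative formula \eqref{eq:derivative} gives $|\vartheta'(\zeta)|\ge\sum_j(1-|z_j|^2)/|\zeta-z_j|^2$; integrating over $\T_+$ and using that each $z_j\in\D_+$ subtends an angle $>\pi$ from $\T_+$, one finds the total change of $\arg\vartheta$ along $\T_+$ exceeds $2\pi$, contradicting univalence there. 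When $\deg\vartheta\ge 3$ there is extra mass in \eqref{eq:derivative}, making the inequality strict even for $z_j\in\clos{\D_+}\cap\D$, which yields univalence across $(-1,1)$.
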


In the proof of Lemma \ref{lem:univalence} we will use the fact that for
$a\in\D$ and $-\pi\leq t_1<t_2 \leq\pi$,
$$
\frac{1}{2\pi}\int_{t_1}^{t_2}
\frac{1-|a|^2}{|e^{it}-a|^2} \dm t
=
\alpha/\pi - (t_2-t_1)/2\pi,
$$
where $\alpha=\arg((e^{it_2}-a)/(e^{it_1}-a))$ is the angle subtended
by the arc $\{e^{it}:t_1<t<t_2\}$ from $a$ \cite[Ex. 3, p.40]{gar1}.

\begin{proof}
Take $\vartheta$ to be univalent on $\T_+$ and suppose there are distinct
points $z_1, z_2\in\D_+$ such that $\vartheta(z_1)=\vartheta(z_2)=a$. By
replacing $\vartheta$ with $\omega_a\circ\vartheta$ if necessary, where
$\omega_a$ is given by \eqref{eq:mobius}, we can suppose $a=0$. Then using
\eqref{eq:derivative} we see that at each $\zeta\in\T$, we have
$$
|\vartheta'(\zeta)|
\geq
\frac{1-|z_1|^2}{|\zeta-z_1|^2}
+
\frac{1-|z_2|^2}{|\zeta-z_2|^2}.
$$
Let $\alpha_1$ and $\alpha_2$ be the angles subtended by
$\T_+$ from $z_1$ and $z_2$ respectively. Observe that since $z_1,z_2\in
\D_+$, we have $\alpha_1,\alpha_2>\pi$. Then the winding number of
$\vartheta|_{\T_+}$ around $0$ is
\begin{align*}
\frac{1}{2\pi}\int_{0}^{\pi}
\Abs{\vartheta'(e^{it})} \dm t
&\geq
\frac{1}{2\pi}\int_{0}^{\pi}
\frac{1-|z_1|^2}{|e^{it}-z_1|^2} \dm t
+
\frac{1}{2\pi}\int_{0}^{\pi}
\frac{1-|z_2|^2}{|e^{it}-z_2|^2} \dm t \\[5pt]
&=
\frac{(\alpha_1 +\alpha_2)}{\pi} - 1 > 1,
\end{align*}
which is a contradiction. Moreover, if $\deg\vartheta\geq 3$, $\vartheta$
will have additional zeros or a non-trivial singular factor, so the first
inequality above will be strict. Hence if $z_1,z_2\in\clos{\D_+}\cap\D$
(we allow the case $z_1=z_2$ if $\vartheta$ has a multiple zero), so that
$\alpha_1,\alpha_2\geq\pi$, we will also get a contradiction. Hence
$\vartheta$ must be univalent across $(-1,1)$ in this case.
\end{proof}

\begin{lemma}\label{lem:log-equiv}
Let $\vartheta$ be an inner function with $\vartheta(0)=0$ and
$\deg\vartheta\geq3$. Suppose that $\vartheta$ is univalent on $\T_+$ and
continuous at $\pm1$. Then for $z\in\D_+$ we have the estimate
\[
-\log|\vartheta(z)| \approx -\log|z|.
\label{eq:log-equiv}
\]
\end{lemma}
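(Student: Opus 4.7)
The estimate asserts a two-sided comparison. The lower bound $-\log|\vartheta(z)|\geq -\log|z|$ is immediate from the Schwarz lemma applied to $\vartheta:\D\to\D$ with $\vartheta(0)=0$. The content of the lemma is thus the reverse inequality $-\log|\vartheta(z)|\leq C(-\log|z|)$ for some constant $C>0$.

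My plan is to use the continuity hypothesis at $\pm 1$ to promote the analyticity of $\vartheta$ on $\T_+$ (which holds because univalence on $\T_+$ forces $\scrS(\vartheta)\cap\T_+=\emptyset$) to analyticity on an open $\C$-neighborhood of the closed arc $\clos{\T_+}$. The accumulation of zeros of $\vartheta$ at $\pm 1$ is ruled out by continuity, since a sequence $z_j\to 1$ of zeros would give $\vartheta(z_j)=0$, contradicting $\vartheta(z)\to\vartheta(1)\in\T$. Accumulation of $\supp\mu$ at $\pm 1$ is ruled out analogously by approaching $\pm 1$ through points in $\D$ along which the local singular mass forces $|\vartheta|$ away from $1$. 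Hence $\scrS(\vartheta)$ is bounded away from $\clos{\T_+}$, and \eqref{eq:derivative} shows that $|\vartheta'|$ extends continuously to $\clos{\T_+}$ with finite maximum $M:=\sup_{\clos{\T_+}}|\vartheta'|$.

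With this analytic extension in hand, I would establish the upper bound region by region. On a $\C$-neighborhood of $\clos{\T_+}$ intersected with $\D_+$, the Lipschitz bound $|\vartheta(z)-\vartheta(\zeta)|\leq M|z-\zeta|$, applied at $\zeta=z/|z|\in\T_+$ (so $|z-\zeta|=1-|z|$ and $|\vartheta(\zeta)|=1$), yields $1-|\vartheta(z)|\leq M(1-|z|)$, whence $-\log|\vartheta(z)|\approx 1-|\vartheta(z)|\leq M(1-|z|)\approx -\log|z|$. Near $0$, the Taylor expansion $\vartheta(z)=\vartheta'(0)z+O(z^2)$ with $\vartheta'(0)\neq 0$ gives the ratio $-\log|\vartheta(z)|/(-\log|z|)\to 1$. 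On the compact remainder, $-\log|z|$ is bounded below by a positive constant while $-\log|\vartheta(z)|$ is bounded above by continuity (using that $\vartheta$ has no zeros there, by Lemma \ref{lem:univalence}), so the ratio is uniformly bounded.

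The main obstacle is the behavior at the corner points $\pm 1$ of $\D_+$: standard angular-derivative bounds only control approach within Stolz angles at $\pm 1$, whereas $z\in\D_+$ may approach $\pm 1$ along paths tangent to $\T$. The analytic extension of $\vartheta$ to a full $\C$-neighborhood of $\pm 1$ circumvents this: the Lipschitz estimate $|\vartheta(z)-\vartheta(\pm 1)|=O(|z\mp 1|)$ holds unrestrictedly in this neighborhood, not merely in Stolz angles, and this uniformity is what allows the estimate to close up to the corners.
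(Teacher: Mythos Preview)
Your proposal is correct and takes essentially the same approach as the paper: Schwarz for one inequality, and for the other, the Lipschitz bound coming from boundedness of $\vartheta'$ on $\D_+$ (via analytic extension across $\clos{\T_+}$), together with the fact that $\vartheta$ has only a simple zero at $0$ in $\clos{\D_+}$ (Lemma~\ref{lem:univalence}). The paper packages the same ingredients as two global estimates $|\vartheta(z)|\approx|z|$ and $1-|\vartheta(z)|\approx 1-|z|$ on $\D_+$, rather than your three-region split, and simply asserts that $\vartheta'$ is bounded on $\D_+$ without the justification from analytic continuation that you supply.
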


\begin{proof}
By Lemma \ref{lem:univalence}, $\vartheta$ does not vanish on $\clos{\D_+}
\setminus\{0\}$ and $\vartheta'(0)\neq0$. It follows that $|z/\vartheta(z)|$ is
bounded on $\D_+$. Moreover, since $\vartheta'$ is bounded on $\D_+$, for
each $z\in\D_+$ we have
$$
1-|\vartheta(z)|
\leq
|\vartheta(z/|z|)-\vartheta(z)|
\leq
C(1-|z|).
$$
Since $|\vartheta(z)|\leq|z|$  by the Schwarz lemma, we have the estimates
$|\vartheta(z)|\approx|z|$ and $1-|\vartheta(z)|\approx1-|z|$. The first of
these implies that $\log|\vartheta(z)|\sim\log|z|$ as $z\to0$, and so
\eqref{eq:log-equiv} holds for all $z\in\D_+$ such that $|z|$ is sufficiently
small. But when $|z|$ (and hence $|\vartheta(z)|$) is bounded away from $0$,
we have the elementary estimates
$$
-\log|z|\approx 1-|z| \approx 1-|\vartheta(z)|\approx -\log|\vartheta(z)|,
$$
and so we conclude that \eqref{eq:log-equiv} holds for all $z\in\D_+$.
\end{proof}

\begin{lemma}\label{lem:cf-convergence}
Let $\vartheta$ and $u$ be as in Lemma \ref{lem:counting}. Let $\{\vartheta_j\}$
be a sequence of inner functions such that each $\vartheta_j$ is
univalent on $\T_+$ and $\vartheta_j\to\vartheta$ uniformly on compact subsets
of $\D$. Then there is a set $E_2\subseteq\D$ of logarithmic capacity zero such
that $\calN_{\vartheta_j,u}(z)\to\calN_{\vartheta,u}(z)$ for all
$z\in\D\setminus E_2$.
\end{lemma}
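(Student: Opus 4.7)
The plan is to combine Hurwitz's theorem, which gives pointwise convergence of individual preimages, with Littlewood's equality \eqref{eq:littlewood}, which fixes the total Nevanlinna mass and thereby controls the infinite tail of the sum. Fix $z\in\D$ outside an exceptional set $E_2$ to be specified below, and set $a=\vartheta(z)$ and $a_j=\vartheta_j(z)$; locally uniform convergence gives $a_j\to a$ and $\vartheta_j(0)\to\vartheta(0)=0$. Enumerate $\vartheta^{-1}(a)\setminus\{z\}=\{w_k\}_{k\geq 1}$, a discrete subset of $\D$. A local Hurwitz argument around each $w_k$---valid once $z$ avoids the (countable) preimage of the critical values of $\vartheta$, so that $\vartheta-a$ has a simple zero at $w_k$---produces preimages $w_k^{(j)}\in\vartheta_j^{-1}(a_j)$ with $w_k^{(j)}\to w_k$; continuity of $u$ at $w_k$ (ensured by $w_k\notin S$) then yields $u(w_k^{(j)})\to u(w_k)$ for each fixed $k$.

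The hard part will be bounding the infinite tail $\sum_{k>K}u(w_k^{(j)})$ uniformly in $j$. By the hypotheses on $\vartheta$, $\scrS(\vartheta)\subset\T_-$, and by Lemma \ref{lem:univalence} the only preimage of $\vartheta(z)$ in $\D_+$ is at most a single point; the preimages on $(-1,1)$ are finite (since $\pm1\notin\scrS(\vartheta)$), so all but finitely many $w_k$---and, for $j$ large, all but finitely many $w_k^{(j)}$---lie in $\D_-$. The growth bound $u(w)\leq -C\log|w|$ on $\D_-$ then dominates the tail by the tail of the Nevanlinna counting function:
$$
\sum_{k>K}u(w_k^{(j)}) \leq C\sum_{k>K}-\log|w_k^{(j)}|.
$$
Littlewood's equality applied to the inner function $\vartheta_j$, valid for $a_j\notin\calE_{\vartheta_j}$, gives
$$
\sum_{w\in\vartheta_j^{-1}(a_j)}-\log|w|=\log\Abs{\frac{1-\overline{a_j}\vartheta_j(0)}{a_j-\vartheta_j(0)}}\longrightarrow -\log|a|=\sum_{w\in\vartheta^{-1}(a)}-\log|w|.
$$
Subtracting the contributions of $z$ and of the first $K$ preimages (each of which converges by Hurwitz), I conclude that $\sum_{k>K}-\log|w_k^{(j)}|\to\sum_{k>K}-\log|w_k|$ as $j\to\infty$, and the latter vanishes as $K\to\infty$ since it is the tail of a convergent series.

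A standard three-$\varepsilon$ argument now delivers $\calN_{\vartheta_j,u}(z)\to\calN_{\vartheta,u}(z)$: pick $K$ large so that both tails lie below $\varepsilon$ via the previous step, then $j$ large so that the $K$-term heads match within $\varepsilon$ by Hurwitz. The exceptional set $E_2$ is the union of: (i) $\vartheta^{-1}(\calE_\vartheta)\cup\bigcup_j\vartheta_j^{-1}(\calE_{\vartheta_j})$---each factor is polar by Frostman's theorem and since preimages of polar sets under analytic maps are polar, the countable union is still logarithmically negligible; (ii) the countable set $\vartheta^{-1}(\vartheta(S))$, which keeps the $w_k$ away from $S$; and (iii) the preimages of the (countable) critical-value set of $\vartheta$, where the Hurwitz step required simple zeros. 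Each is of logarithmic capacity zero, so $E_2$ is as well.
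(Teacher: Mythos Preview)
Your argument is correct and matches the paper's approach: both split $\calN_{\vartheta_j,u}(z)$ into a finite head, handled by Hurwitz/Rouch\'e together with continuity of $u$, and a tail near $\T$, controlled via the hypothesis $u\leq -C\log|\cdot|$ on $\D_-$ combined with Littlewood's equality (which fixes $N_{\vartheta_j}(a_j)$ and hence the Nevanlinna tail). The only cosmetic difference is that the paper partitions by a radius cutoff $\{|w|\leq r\}$ versus $\{|w|>r\}$ rather than by an enumeration index $k\leq K$; the radius cutoff automatically absorbs any preimages of $\vartheta_j$ not arising from a preimage of $\vartheta$, whereas in your write-up such ``extra'' preimages are not literally among the $w_k^{(j)}$---but they are nonetheless controlled by the same Littlewood subtraction you perform, so the issue is purely notational.
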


\begin{proof}
First note that, as a simple consequence of Rouche's theorem, for each $z\in\D$
and $0<r<1$ we have
\[
\lim_{n\to\infty}
\sum_{\substack{w\in\vartheta_j^{-1}(\vartheta_j(z)) \\ |w|\leq r}} u(w)
\; =
\sum_{\substack{w\in\vartheta^{-1}(\vartheta(z)) \\ |w|\leq r}} u(w).
\label{eq:convergence1}
\]
In addition, by Lemma \ref{lem:univalence}, each $\vartheta_j$ is univalent on
$\D_+$ and so for $r$ sufficiently close to $1$ (depending on $z$ but independent
of $j$) the sets $\vartheta_j^{-1}(\vartheta_j(z))\cap\{|w|>r\}$ and
$\vartheta^{-1}(\vartheta(z))\cap\{|w|>r\}$ are contained in $\D_-$.

Take $z\in\D$ such that $\vartheta_j(z)\notin\calE_{\vartheta_j}\cup\vartheta_j(S)$
for each $j$ and $\vartheta(z)\notin\calE_{\vartheta}\cup\vartheta(S)$. Note that
by Frostman's theorem, the complement of all such $z$ has logarithmic capacity
zero. Since we have equality in Littlewood's estimate \eqref{eq:littlewood} for
$N_{\vartheta_j}(\vartheta_j(z))$ and $N_{\vartheta}(\vartheta(z))$, it is
straightforward to check that $N_{\vartheta_j}(\vartheta_j(z))\to
N_{\vartheta}(\vartheta(z))$. Hence for each $\eps>0$, we can choose $r$
sufficiently close to $1$ so that
$$
\limsup_{n\to\infty}
\sum_{\substack{w\in\vartheta_j^{-1}(\vartheta_j(z)) \\ |w|> r}} u(w)
\;\leq
C\limsup_{n\to\infty}
\sum_{\substack{w\in\vartheta^{-1}(\vartheta(z)) \\ |w|\leq r}} -\log|w|
< \eps.
$$
Combining this with \eqref{eq:convergence1} completes the proof.
\end{proof}

\begin{proof}[Proof of Lemma \ref{lem:counting}]
\ref{lem:counting1}
This is a straightforward consequence of Littlewood's estimate 
\eqref{eq:littlewood} and Lemma \ref{lem:log-equiv}. Indeed, for $z\in\D_+$
we have
$$
\calN_{\vartheta, u}(z)
\leq 
\sum_{\substack{w\in\vartheta^{-1}(\vartheta(z)) \\ w\neq z}} -C\log|w|
\leq C
N_\vartheta(\vartheta(z))
\leq 
-C\log|\vartheta(z)|
\leq 
-C\log|z|.
$$

\vspace{5pt}

\ref{lem:counting2}
Observe that $\calN_{\vartheta,u}=\calN_{\omega_a\circ\vartheta, u}$
for each $a\in\D$, and so by replacing $\vartheta$ by $\omega_a\circ\vartheta$
with $a\in\D\setminus\calE_\vartheta$ we may suppose that $\vartheta$ is
a Blaschke product. 

Suppose first that $\vartheta$ is a finite Blaschke product of degree $N$ and let
$C=\{z\in\D:\vartheta'(z)=0\}$. Let $D$ be a disk such that $\clos{D}\subseteq
\D\setminus S'$. Then $u$ is bounded on $\vartheta^{-1}(\vartheta(D))$, and so
$\calN_{\vartheta, u}$ is well-defined and bounded on $D$. In addition, each
point in $D\setminus \vartheta^{-1}(\vartheta(C))$ has a neighbourhood $V$
such that $\vartheta^{-1}(\vartheta(V))$ consists of $N$ disjoint domains
$V_1,\dots,V_N$ each of which is mapped conformally onto $V$ by $\vartheta$.
Define $g_j$ on $V$ by setting $g_j:=(\vartheta|_{V_j})^{-1}\circ\vartheta$.
So for $z\in V$,
$$
\calN_{\vartheta, u}(z)
=
\sum_{j=1}^N u(g_j(z)),
$$
which is harmonic on $V$. Hence $\calN_{\vartheta, u}$ is harmonic and bounded
on $D\setminus \vartheta^{-1}(\vartheta(C))$. Since $\vartheta^{-1}
(\vartheta(C))$ is finite, it is removable. We conclude that
$\calN_{\vartheta, u}$ extends to a harmonic function on $\D\setminus S'$.

If $\vartheta$ is an infinite Blaschke product, let $\{\vartheta_j\}$ be its
sequence of partial products. Then $\vartheta_j\to\vartheta$ uniformly on
compact subsets of $\D$, and since $|\vartheta_j'(\zeta)|$ increases to
$|\vartheta'(\zeta)|$ at each $\zeta\in\T$, $\vartheta_j$ is univalent on
$\T_+$ (and hence on $\D_+$) for every $j$. Let $K$ be a compact subset of
$\D\setminus S$. Then $u$ and $\log|\vartheta_j|$ are bounded on
$\vartheta_j^{-1}(\vartheta_j(K))$ uniformly for all sufficiently large $j$.
Moreover, for each $z\in K$, at most one element of $\vartheta_j^{-1}
(\vartheta_j(z))$ is contained in $\D_+$, and so
$$
\calN_{\vartheta_j, u}(z)
\leq C
(1+N_{\vartheta_j}(\vartheta_j(z)))
\leq C
(1-\log|\vartheta_j(z)|)
\leq C <\infty.
$$
It follows that some subsequence of $\calN_{\vartheta_j, u}$ converges uniformly
on compact subsets of $\D\setminus S'$ to a harmonic function. But by Lemma
\ref{lem:cf-convergence}, $\calN_{\vartheta_j, u}(z)\to\calN_{\vartheta, u}(z)$
for $z$ in the complement of a set of capacity zero.

\vspace{5pt}
\noindent
\ref{lem:counting3}
Let $\{\mu_\alpha: \alpha\in\T\}$ be the AC measures for $\vartheta$.
Fix $\zeta\in\T_+$. Let $K$ be a compact neighbourhood of $\zeta$ in
$\D_+\cup\T_+$ and set $L=\clos{\vartheta^{-1}(\vartheta(K))\setminus K}$.
By Lemma \ref{lem:univalence}, $\vartheta$ is univalent on $\D_+$ and so $L$
is contained in $\D_-\cup\T_-$. Let $f$ be a continuous function on $\clos{\D}$
such that $f(z)=(-\log|z|)^{-1}u(z)$ for $z\in L\cap\D_-$ and $f=0$ on $\D_+$.
In particular, this implies that $f=\partial u$ on $\supp\mu_{\vartheta(\zeta)}
\cap\T_-$, and for $z\in K$,
$$
\calN_{\vartheta,u}(z)
=
\sum_{w\in\vartheta^{-1}(\vartheta(z))} -f(w)\log|w|.
$$
As a result we have that for all $z\in K$,
\[
\frac{\calN_{\vartheta,u}(z)}{-\log|z|} 
=
\CBr{\frac{-\log|\vartheta(z)|}{-\log|z|}}
\CBr{\frac{1}{-\log|\vartheta(z)|}
\sum_{w\in\vartheta^{-1}(\vartheta(z))} -f(w)\log|w|}.
\label{eq:counting-radial}
\]
Next, let $E_1$ and $E_2$ be the exceptional sets given by Theorem
\ref{thm:Niem-Saks} and Lemma \ref{lem:cf-convergence} respectively, and
set $E = \vartheta^{-1}(E_1)\cup E_2$.
Then let $z$ tend to $\zeta$ in $\D\setminus E$. The left hand side
of \eqref{eq:counting-radial} will tend to
$\partial\calN_{\vartheta, u}(\zeta)$. Since $\vartheta$ is continuous
at $\zeta$, we will have that $\vartheta(z)\to\vartheta(\zeta)$. Then by the
Julia-Carath\'eodory theorem (see e.g. \cite[Thm. 4.8]{Mashregi1}) and the
elementary estimate $1-x\sim-\log x$ as $x\to1^-$, the first factor on the
right hand side of \eqref{eq:counting-radial} will converge to
$|\vartheta'(\zeta)|$, and by Theorem \ref{thm:Niem-Saks}, the second factor
will converge to 
$$
\int_\T f \dm\mu_{\vartheta(\zeta)}.
$$
Finally, using \eqref{eq:T-op-AC}, we conclude that
$$
\partial\calN_{\vartheta, u}(\zeta)
=
|\vartheta'(\zeta)|\int_\T f \dm\mu_{\vartheta(\zeta)}
=
|\vartheta'(\zeta)|\int_{\T_-} \partial u \dm\mu_{\vartheta(\zeta)}
=
T_\vartheta \partial u (\zeta).
$$
\end{proof}


\section{The transfer operator $\bT$}\label{sec:transfer}

\subsection{Reduction to a fixed point problem}

Throughout \S\ref{sec:transfer} and \S\ref{sec:span-L}, we fix two inner
functions $\vartheta$ and $\varphi$ satisfying the conditions of Theorem
\ref{thm:main}. Let $I$ be an arc such that conditions \ref{thm:main:cond1}
and \ref{thm:main:cond2} in Theorem \ref{thm:main} hold, and let $\omega$
be an automorphism of $\D$ mapping $I$ onto $\T_+$.
Then $\{\vartheta^m,\,\varphi^n: m,n\in\Z\}$ is (weak-$^*$) complete in
$L^\infty(\T)$ if and only if $\{(\vartheta\circ\omega)^m,\,
(\varphi\circ\omega)^n: m,n\in\Z\}$ is complete, so we will suppose that
\ref{thm:main:cond1} and \ref{thm:main:cond2} hold with $I=\T_+$. In
addition, replacing $\vartheta$ and $\varphi$ with
$\omega_{\vartheta(0)}\circ\vartheta$ and $\omega_{\varphi(0)}\circ\varphi$
respectively does not change the closed span of $\{\vartheta^m,\,\varphi^n:
m,n\in\Z\}$ and so we will additionally suppose that $\vartheta(0)=\varphi(0)=0$.
Finally, we let $\{\mu_\alpha:\alpha\in\T\}$ and $\{\nu_\alpha:\alpha\in\T\}$
be the AC measures for $\vartheta$ and $\varphi$ respectively.

Let $T_{\vartheta}$ and $T_{\varphi}$ be as in \eqref{eq:T-op}. Then from
the discussion in \S\ref{subsec:expectation} we see that if $f\in L^1(\T)$
satisfies
$$
\jap{f,\vartheta^m} = \jap{f,\varphi^n}=0, \quad m,n\in\Z,
$$
then $f|_{\T_+} = T_{\vartheta} (\1_{\T_-}f)$ and $f|_{\T_-} =
T_{\varphi} (\1_{\T_+}f)$. Hence $f|_{\T_+} = \1_{\T_+}T_{\vartheta}
(\1_{\T_-}T_{\varphi}(f|_{\T_+}))$, and if $f|_{\T_+}=0$, then $f=0$.
In light of this, we introduce the operator
$$
\bT:L^1(\T_+)\to L^1(\T_+), \quad \bT=\1_{\T_+}T_{\vartheta} \1_{\T_-}T_{\varphi}.
$$
We have shown that $\{\vartheta^m,\; \varphi^n:\; m,n\in\Z\}$ is complete in the
weak-$^*$ topology of $L^\infty(\T)$ if and only if the operator $\bT$ has
no non-zero fixed points.

\begin{remark}
When $\clos{\vartheta(\T_+)}=\clos{\varphi(\T_-)}=\T$, $\bT$ is the
\emph{Perron-Frobenius} or \emph{transfer} operator of a dynamical
system on $\T_+$. Indeed, since $\vartheta$ and $\varphi$ are invertible on
$\T_+$ and $\T_-$ respectively, the map $\phi:\T_+\to\T_+$ given by
$$
\phi=(\vartheta|_{\T_+})^{-1}\circ\vartheta
\circ(\varphi|_{\T_-})^{-1}\circ\varphi
$$
is a well-defined (almost everywhere) nonsingular transformation of $\T_+$.
Let $C_\phi$ be the composition operator $f\mapsto f\circ\phi$. Then $\bT
= C_\phi^*$. Hence existence (or uniqueness) of a non-zero fixed point for
$\bT$ is equivalent to existence (resp. ergodicity) of an absolutely continuous
invariant measure for $\phi$. This point of view was used to great effect in
the work of Hedenmalm, Montes-Rodr\'iguez and Canto-Mart\'in
\cite{hem1, hem2, hem3, chm1}, and has inspired the approach we have taken here.
\end{remark}

\subsection{Properties of $\bT$}

Given a Lebesgue measurable set $E\subseteq\T_+$, we set
$$
\Lambda(E)
=
\varphi^{-1}(\varphi(\vartheta^{-1}(\vartheta(E))\cap\T_-))\cap\T_+.
$$
We collect some elementary properties of $\bT$ in the following lemma.

\begin{lemma}\label{lem:P-properties}
For each $f\in L^1(\T_+)$, the following statements hold:
\begin{enumerate}[label=(\alph*), itemsep=5pt]
\item\label{lem:P-properties:triangle}
$|\bT f| \leq \bT(|f|)$;
\item\label{lem:P-properties:positive}
if $f\geq 0$ then $\bT f\geq 0$;
\item\label{lem:P-properties:cts}
if $f\in C(\clos{\T_+})$ then $\bT f\in C(\clos{\T_+})$;
\item\label{lem:P-properties:contraction}
$\norm{\bT f}_{L^1(\T_+)}\leq\norm{f}_{L^1(\T_+)}$;
\item\label{lem:P-properties:adjoint}
if $E\subseteq \T_+$ is Lebesgue measurable, then
$\jap{\bT f,\1_E}=\jap{f,\1_{\Lambda(E)}}$;
\item\label{lem:P-properties:modulus}
if $\bT f=f$ then $\bT(|f|)=|f|$.
\end{enumerate}
\end{lemma}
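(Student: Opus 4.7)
The plan is to reduce every assertion to the corresponding statement for the single-factor operator $T_\vartheta$ (and $T_\varphi$) proved in Lemma~\ref{lem:T-prop}. The operator $\bT$ is the composition $T_\vartheta \circ (\mathbf{1}_{\T_-}\,\cdot\,) \circ T_\varphi$ followed by restriction to $\T_+$, and $T_\varphi f \in L^1(\T_-)$ for $f \in L^1(\T_+)$ by Lemma~\ref{lem:T-prop}\ref{lem:T-prop:contraction}, so each factor acts on exactly the space the next factor requires. Parts \ref{lem:P-properties:triangle} and \ref{lem:P-properties:positive} then follow immediately from two applications of Lemma~\ref{lem:T-prop}\ref{lem:T-prop:triangle}, \ref{lem:T-prop:positive}; similarly \ref{lem:P-properties:cts} follows from two applications of Lemma~\ref{lem:T-prop}\ref{lem:T-prop:cts} (multiplication by $\1_{\T_-}$ preserves continuity on $\clos{\T_-}$ because $T_\varphi f$ extends continuously to $\clos{\T_-}$), and \ref{lem:P-properties:contraction} from chaining the $L^1$ contraction bound twice:
\[
\norm{\bT f}_{L^1(\T_+)} \leq \norm{\1_{\T_-} T_\varphi f}_{L^1(\T_-)} \leq \norm{f}_{L^1(\T_+)}.
\]

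For \ref{lem:P-properties:adjoint}, I would apply Lemma~\ref{lem:T-prop}\ref{lem:T-prop:adjoint} twice, being careful about supports. Setting $F = \vartheta^{-1}(\vartheta(E)) \cap \T_-$, the first application (with $\vartheta$ and the set $E \subseteq \T_+$) gives
\[
\jap{\bT f,\1_E} = \jap{T_\vartheta(\1_{\T_-} T_\varphi f), \1_E} = \jap{\1_{\T_-} T_\varphi f, \1_{\vartheta^{-1}(\vartheta(E))\setminus E}} = \jap{T_\varphi f, \1_F},
\]
where the last equality uses $E \subseteq \T_+$. The second application (with $\varphi$ and the set $F \subseteq \T_-$) gives $\jap{T_\varphi f, \1_F} = \jap{f, \1_{\varphi^{-1}(\varphi(F))\setminus F}}$, which, since $f$ is supported on $\T_+$, equals $\jap{f, \1_{\varphi^{-1}(\varphi(F)) \cap \T_+}} = \jap{f, \1_{\Lambda(E)}}$.

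Finally, for \ref{lem:P-properties:modulus}, combining \ref{lem:P-properties:triangle} with the assumption $\bT f = f$ yields $|f| = |\bT f| \leq \bT(|f|)$ almost everywhere on $\T_+$. On the other hand, \ref{lem:P-properties:contraction} applied to $|f|$ gives $\norm{\bT(|f|)}_{L^1(\T_+)} \leq \norm{|f|}_{L^1(\T_+)}$. A non-negative function dominating $|f|$ pointwise and having no larger $L^1$ norm must equal $|f|$ almost everywhere, so $\bT(|f|) = |f|$.

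I do not anticipate a real obstacle: every step is a direct consequence of Lemma~\ref{lem:T-prop} combined with standard bookkeeping. The only place requiring a touch of care is the adjoint identity \ref{lem:P-properties:adjoint}, where one must track how the indicator $\1_{\T_-}$ between the two factors interacts with the preimage sets, and confirm that the extraneous set $\varphi^{-1}(\varphi(F)) \cap \T_-$ in the second application drops out because $f$ vanishes there.
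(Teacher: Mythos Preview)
Your proposal is correct and follows essentially the same approach as the paper: the paper simply states that \ref{lem:P-properties:triangle}--\ref{lem:P-properties:adjoint} follow immediately from the corresponding properties in Lemma~\ref{lem:T-prop}, and proves \ref{lem:P-properties:modulus} by exactly the argument you give (pointwise domination $|f|\leq\bT(|f|)$ combined with the $L^1$ contraction to force equality). Your write-up of \ref{lem:P-properties:adjoint} is in fact more detailed than the paper's, and the bookkeeping you spell out is correct.
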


\begin{proof}
\ref{lem:P-properties:triangle}--\ref{lem:P-properties:adjoint} follow immediately
from the corresponding properties for $T_{\vartheta}$ and $T_{\varphi}$ given in
Lemma \ref{lem:T-prop}. To prove \ref{lem:P-properties:modulus} we suppose
$\bT f=f$ and observe that
$|f|=|\bT f|\leq\bT(|f|)$. Then integrating both sides and using
\ref{lem:P-properties:contraction} gives
$$
\norm{f}_{L^1(\T_+)}\leq\norm{\bT(|f|)}_{L^1(\T_+)}\leq\norm{f}_{L^1(\T_+)}.
$$
Hence $\norm{f}_{L^1(\T_+)}=\norm{\bT(|f|)}_{L^1(\T_+)}$. But since
$|f|\leq\bT(|f|)$, we must actually have $|f|=\bT(|f|)$ almost everywhere.
\end{proof}

We will also need the following:

\begin{lemma}\label{lem:bdd-below}
The following statements hold:
\begin{enumerate}[label=(\alph*), itemsep=5pt]
\item\label{lem:bdd-below:analytic}
Let $f\in L^1(\T_+)$, $f\neq0$, be analytic on $\T_+$. Then there exists an
integer $n\geq1$ such that $\inf_{\zeta\in\T_+} \bT^n(|f|)(\zeta) > 0$.
\item\label{lem:bdd-below:arc}
There exists a closed arc $J_0\subseteq\T_+$ such that if $f\in L^1(\T_+)$ and
$\inf_{\zeta\in J_0} |f(\zeta)| > 0$, then
$
\inf_{\zeta\in\T_+} \bT(|f|)(\zeta) > 0.
$
\end{enumerate}
\end{lemma}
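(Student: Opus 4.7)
The plan is to prove (b) first by an explicit geometric construction of $J_0$, and then derive (a) from (b) via positivity of $\bT$ together with an irreducibility argument. The computational tool throughout is the kernel formula obtained by applying \eqref{eq:T-op-AC} twice: for $\zeta \in \T_+$,
$$\bT f(\zeta) = |\vartheta'(\zeta)| \int_{\T_-} |\varphi'(\eta)| \Br{\int_{\T_+} f\, d\nu_{\varphi(\eta)}} d\mu_{\vartheta(\zeta)}(\eta).$$
The atomic part of $\mu_{\vartheta(\zeta)}|_{\T_-}$ is supported on $\vartheta^{-1}(\vartheta(\zeta)) \cap \T_-$ (with weights $1/|\vartheta'|$), while by univalence of $\varphi$ on $\T_-$, $\nu_{\varphi(\eta)}|_{\T_+}$ reduces (when non-trivial) to a single atom at $\zeta_\eta := \varphi|_{\T_+}^{-1}(\varphi(\eta))$.

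For (b), the continuity of $\vartheta$ at $\pm 1$ forces $\{\pm 1\}\cap\scrS(\vartheta) = \emptyset$, and combined with univalence and $\deg\vartheta \geq 3$ (as in Lemma \ref{lem:univalence}) it gives that $\vartheta$ is analytic and locally univalent at $\pm 1$. Hence $V := \vartheta^{-1}(\vartheta(1))\setminus\{1\}$ and $W := \vartheta^{-1}(\vartheta(-1))\setminus\{-1\}$ are non-empty compact subsets of $\T_-\setminus\{\pm 1\}$ (assuming $\vartheta(1)\neq\vartheta(-1)$; the other case is handled similarly). A compactness/continuity argument then produces a compact $L\subseteq\T_-\setminus\{\pm 1\}$ which meets $\vartheta^{-1}(\vartheta(\zeta))$ for every $\zeta\in\T_+$: for $\zeta$ in a bulk compact subarc of $\T_+$ uniform continuity does the job, while the limiting cases $\zeta\to\pm 1$ are covered by $V$ and $W$ respectively. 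Applying the analogous reasoning to $\varphi$, the image $\{\zeta_\eta:\eta\in L\}$ lies in a compact subset of $\T_+\setminus\{\pm 1\}$, and $J_0$ is taken to be any closed subarc of $\T_+$ containing it. If $|f|\geq c\1_{J_0}$, isolating the contribution of a single $\eta\in L$ yields
$$\bT(|f|)(\zeta) \;\geq\; c\,\frac{|\vartheta'(\zeta)|\,|\varphi'(\eta)|}{|\vartheta'(\eta)|\,|\varphi'(\zeta_\eta)|},$$
and each factor is uniformly controlled because the relevant derivatives are bounded above and below on the compact sets involved.

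For (a), analyticity of $f\neq 0$ on $\T_+$ yields a discrete zero set, so there exist an open arc $K\subseteq\T_+$ and $c>0$ with $|f|\geq c\1_K$. By positivity (Lemma \ref{lem:P-properties}\ref{lem:P-properties:positive}), $\bT^n(|f|)\geq c\bT^n(\1_K)$, so it suffices to find $n$ with $\bT^n(\1_K)$ bounded below on $J_0$: one more application of (b) then finishes the proof with $n+1$. Writing $\zeta\sim\zeta'$ when there exists $\eta\in\T_-$ with $\vartheta(\eta)=\vartheta(\zeta)$ and $\varphi(\eta)=\varphi(\zeta')$, the kernel formula shows that $\supp\bT^k(\1_K)$ contains the $k$-fold $\sim$-neighbourhood of $K$. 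I will argue, using the expansion of $|\vartheta'|,|\varphi'|$ away from $\pm 1$ together with local invariants of $\vartheta,\varphi$, that this neighbourhood covers $\clos{J_0}$ after finitely many steps. Approximating $\1_K$ from below by a continuous bump $f_1$ supported in $K$ and iterating Lemma \ref{lem:P-properties}\ref{lem:P-properties:cts}, one obtains $\bT^k f_1\in C(\clos{\T_+})$; strict positivity on the compact set $\clos{J_0}$ then upgrades to a uniform positive lower bound.

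The principal difficulty is the irreducibility claim in (a) -- that finitely many $\sim$-steps connect any open arc in $\T_+$ to $\clos{J_0}$. This is a qualitative minimality property of the multi-valued dynamics associated with $(\vartheta,\varphi)$, and should follow from the fact that the only neutral behaviour of the associated single-valued map $\phi$ sits at the fixed points $\pm 1$, precisely the points avoided by $J_0$ by construction.
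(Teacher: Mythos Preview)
Your proposal has a clear structural gap in part (a), and a smaller slip in part (b).

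For (b), the claim that ``by univalence of $\varphi$ on $\T_-$, $\nu_{\varphi(\eta)}|_{\T_+}$ reduces to a single atom'' is backwards: it is $\varphi|_{\T_-}$ that is univalent, so $\nu_{\varphi(\eta)}|_{\T_+}$ generically carries at least two atoms (or singular mass near $\scrS(\varphi)\subseteq\T_+$), and $\varphi|_{\T_+}^{-1}$ is not single-valued. This is fixable --- you only need \emph{one} atom of $\nu_{\varphi(\eta)}$ to land in $J_0$ --- but the subsequent compactness bookkeeping then needs more care: you must ensure $L$ stays away from $\scrS(\vartheta)\subseteq\T_-$ (so that $|\vartheta'|$ is bounded on $L$) and that for every $\eta\in L$ some $\varphi$-preimage lies in $J_0$ with controlled $|\varphi'|$. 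The paper sidesteps all of this by splitting into cases: when both functions are finite Blaschke products it takes $J_0=\clos{\gamma(\T_+)}$ for a single element $\gamma$ of the invariant group $\Gamma_1$; otherwise it takes $J_0$ to be any closed arc containing a neighbourhood of $\scrS(\varphi)\cup(\varphi^{-1}(\varphi(\scrS(\vartheta)))\cap\T_+)$, and positivity follows because the supports of the relevant AC measures accumulate at these singular sets.

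For (a), you reduce everything to the ``irreducibility claim'' that finitely many $\sim$-steps carry an arbitrary open arc onto $\clos{J_0}$, and then explicitly concede this is the principal difficulty, offering only a heuristic about neutral fixed points at $\pm1$. That is not a proof, and the missing step is precisely the substance of the lemma. The paper's argument is entirely different and never passes through any minimality statement of this kind. In the finite-Blaschke case it writes $\bT^n$ explicitly as a sum over compositions $\gamma\in\Gamma_n$ built from the cyclic invariant groups $G_\vartheta$, $G_\varphi$; since the arcs $\gamma(\T_+)$ are pairwise disjoint and their number grows like $((\deg\vartheta-1)(\deg\varphi-1))^n$, a pigeonhole against the finitely many zeros of $f$ in a fixed compact subarc locates some $\gamma$ with $f$ zero-free on $\clos{\gamma(\T_+)}$. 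In the infinite-degree case the argument is a one-step affair: approximate $|f|$ from below by a continuous $h$ with $h=|f|$ near $\scrS(\varphi)$; since $\supp\nu_{\varphi(\eta)}$ accumulates at $\scrS(\varphi)$ for every $\eta$, the analyticity of $f$ forces $T_\varphi h>0$ on all of $\clos{\T_-}$, and then $\bT h>0$ on $\clos{\T_+}$ for the same reason. Your dynamical route may ultimately be made to work, but as written it stops exactly where the content begins.
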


\begin{proof}
\ref{lem:bdd-below:analytic}
Let us first suppose that $\vartheta$ and $\varphi$ are both finite Blaschke products
and let $G_{\vartheta}$ and $G_{\varphi}$ be the groups of invariants of $\vartheta$ and $\varphi$
respectively (see Example \ref{ex:blaschke}). For $n\geq1$, set
\begin{align*}
\Gamma_1 &= \{h\circ g:\; g\in G_{\vartheta},\; h\in G_{\varphi},\; g,h\neq\Id_\T\}, \\
\Gamma_n &= \{\gamma_1\circ \gamma:\;\gamma_1\in\Gamma_1,\; \gamma\in\Gamma_{n-1}\},
\quad n\geq2.
\end{align*}
Then each $\gamma\in\cup_{n\geq1}\Gamma_n$ is analytic on $\T_+$. Moreover, for
each $n$, $\Gamma_n$ contains $(\deg\vartheta-1)^n(\deg\varphi-1)^n$ elements and the
arcs $\{\gamma(\T_+):\gamma\in\Gamma_n\}$ are pairwise disjoint. Then from the
discussion in Example \ref{ex:blaschke} one easily sees that for
$g\in L^1(\T_+)$, we have
$$
\bT^n g(\zeta)
=
\sum_{\gamma\in\Gamma_n} |\gamma'(\zeta)|g(\gamma(\zeta)).
$$
Thus, by taking $g=|f|$, it suffices to find $n\geq1$ and $\gamma\in\Gamma_n$
such that $f$ has no zeros on $\clos{\gamma(\T_+)}$. Since the zeros of $f$ can
only accumulate at $\pm1$, there is some $\gamma_*\in\Gamma_1$ such that
$\clos{\gamma_*(\T_+)}$ contains finitely many, say $N$, zeros of $f$. Then
taking $n$ sufficiently large so that $\Gamma_n$ contains at least $2N+1$
elements, we see that certainly one of the arcs
$\{\clos{\gamma_*(\gamma(\T_+))}:\gamma\in\Gamma_n\}$
will not contain any zeros of $f$. This proves the claim in the case that
that $\vartheta$ and $\varphi$ are both finite Blaschke products.

Next we suppose that $\varphi$ is not a finite Blaschke product so that
$\scrS(\varphi)$
is non-empty. Take $h\in C(\clos{\T_+})$ such that $0\leq h\leq |f|$ and $h=|f|$ on
an open arc containing $\scrS(\varphi)$. Then since $\bT(|f|)\geq \bT h \geq 0$
and $\bT h\in C(\clos{\T_+})$, it suffices to show that $\bT h$ does not vanish
at any point of $\clos{\T_+}$. Let us first observe that $T_{\varphi} h>0$ on
$\clos{\T_-}$. Indeed, if $T_{\varphi} h(\eta)=0$ for some $\eta\in\clos{\T_-}$,
then $h$ must vanish identically on $\supp\nu_{\varphi(\eta)}$. But since
$\supp\nu_{\varphi(\eta)}$ has an accumulation point in $\scrS(\varphi)$, and $h=|f|$
on a neighbourhood of $\scrS(\varphi)$, this cannot happen. Similarly, since
$T_{\varphi} h$ does not vanish on $\supp\mu_{\vartheta(\zeta)}$ for any
$\zeta\in\clos{\T_+}$, we conclude $\bT h = T_{\vartheta}
\1_{\T_-}T_{\varphi} h > 0$ on $\clos{\T_+}$. For the case when $\varphi$ is a
finite Blaschke product but $\vartheta$ is not, we observe that $T_\varphi f$
is analytic on $\T_-$ (this follows, for example, from the representation 
\eqref{eq:T-blaschke}) and then use the argument above to conclude that
$\inf\bT |f|>0$.

\ref{lem:bdd-below:arc}
In the case that $\vartheta$ and $\varphi$ are both finite Blaschke products,
we can take $J_0$ to be $\clos{\gamma(\T_+)}$, where $\gamma\in\Gamma_1$ is
chosen so that $\clos{\gamma(\T_+)}\subseteq\T_+$. Otherwise we can take $J_0$
to be any closed arc of $\T_+$ which contains a neighbourhood of
$$\scrS(\varphi)\cup\varphi^{-1}(\varphi(\scrS(\vartheta)))\cap\T_+.$$
In either case, the arguments above show that $J_0$ has the desired property.
\end{proof}

\subsection{Local behaviour near an endpoint of $\T_+$}\label{subsec:endpoints}

Assume that $\clos{\vartheta(\T_+)}=\clos{\varphi(\T_-)}=\T$ so that
$\vartheta(1)=\vartheta(-1)$ and $\varphi(1)=\varphi(-1)$. Then from the
discussion at the end of \S\ref{subsec:inner} we see that there are local
invariants $\tau$ and $\sigma$ for $\vartheta$ and $\varphi$ respectively,
such that $\tau(1)=-1$ and $\sigma(-1)=1$. Let $U$ be a fixed neighbourhood
of $1$ on which $\lambda=\sigma\circ\tau$ is a well-defined conformal map,
and set $J=U\cap\T_+$. Observe that for each $E\subseteq J$,
$\lambda(E)\subseteq\Lambda(E)$.

\begin{lemma}\label{lem:neutral-fp}
Suppose there exists $f\in L^1(\T_+)$ such that $f>0$ almost everywhere and
$\bT f=f$. Then $\lambda(J)\subseteq J$ and $|\lambda'(1)|=1$.
\end{lemma}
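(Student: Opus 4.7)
The plan is to turn the fixed-point equation $\bT f = f$ into a mass-comparison inequality under $\lambda$, rule out $|\lambda'(1)|>1$ by a direct arc comparison, and then rerun the same argument at the other endpoint $-1$ of $\T_+$ to rule out $|\lambda'(1)|<1$.

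First I would combine Lemma \ref{lem:P-properties}\ref{lem:P-properties:adjoint} with the hypothesis $\bT f = f$ to obtain, for every measurable $E\subseteq\T_+$, the identity $\int_E f\,d\zeta = \jap{\bT f,\1_E} = \jap{f,\1_{\Lambda(E)}} = \int_{\Lambda(E)} f\,d\zeta$. Writing $d\nu = f\,d\zeta$ and using the inclusion $\lambda(E)\subseteq\Lambda(E)$ together with $f\geq 0$, this yields the basic monotonicity $\nu(E) \geq \nu(\lambda(E))$ whenever $E\subseteq J$ and $\lambda(E)\subseteq\T_+$.

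Next, I would rule out $|\lambda'(1)|>1$. As a composition of two local invariants exchanging $\T_+$ and $\T_-$, the map $\lambda$ is orientation-preserving on $\T$ at its fixed point $1$, so if $|\lambda'(1)|>1$ then for any sufficiently small sub-arc $J_0\subseteq J$ ending at $1$ the image $\lambda(J_0)$ is an arc of the same form but strictly longer; that is, $\lambda(J_0)\supsetneq J_0$ with $\lambda(J_0)\setminus J_0$ of positive Lebesgue measure. Since $f>0$ a.e., this gives $\nu(\lambda(J_0))>\nu(J_0)$, contradicting the monotonicity inequality. Hence $|\lambda'(1)|\leq 1$.

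For the reverse bound I would exploit the symmetric construction at $-1$. Because $\vartheta(1)=\vartheta(-1)$ and $\varphi(1)=\varphi(-1)$, the analytic-inverse germs $\tilde\tau=\tau^{-1}$ and $\tilde\sigma=\sigma^{-1}$ are themselves local invariants of $\vartheta$ and $\varphi$, this time based at $-1$ and $1$; their boundary restrictions send $\T_+$ near $-1$ into $\T_-$ near $1$ and then back into $\T_+$ near $-1$, so $\mu=\tilde\sigma\circ\tilde\tau$ is a conformal self-map of a neighbourhood of $-1$ inside $\T_+$ fixing $-1$. The same bookkeeping that gave $\lambda(E)\subseteq\Lambda(E)$ near $1$ shows $\mu(E)\subseteq\Lambda(E)$ near $-1$, so the monotonicity inequality applies to $\mu$ as well and the preceding step forces $|\mu'(-1)|\leq 1$. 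The chain rule and the identifications $\tilde\tau=\tau^{-1}$, $\tilde\sigma=\sigma^{-1}$ give
$$
|\mu'(-1)| = |\tilde\sigma'(1)|\,|\tilde\tau'(-1)| = \frac{1}{|\sigma'(-1)|\,|\tau'(1)|} = \frac{1}{|\lambda'(1)|},
$$
so $|\lambda'(1)|\geq 1$. Combined with the previous step, $|\lambda'(1)|=1$.

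The containment $\lambda(J)\subseteq J$ then follows by the same arc comparison: with $|\lambda'(1)|=1$ one may expand $\lambda$ in arclength coordinates at $1$ as $\lambda_0(s)=s+c_k s^k+O(s^{k+1})$, where $c_k$ is the first nonzero higher-order coefficient (with $\lambda\equiv\mathrm{Id}$ trivial); $c_k>0$ would give $\lambda(J_0)\supsetneq J_0$ for small arcs $J_0$ and reproduce the contradiction above, so $c_k<0$ and $\lambda(J)\subseteq J$ after shrinking $J$ if necessary. The main obstacle I expect is the symmetric step at $-1$: setting up the correct map $\mu$, verifying $\mu(E)\subseteq\Lambda(E)$ directly from the definition of $\Lambda$, and cleanly computing $|\mu'(-1)|=1/|\lambda'(1)|$ so that the two one-sided bounds combine to force $|\lambda'(1)|=1$.
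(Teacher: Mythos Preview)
Your approach is essentially the paper's: derive the monotonicity $\nu(E)\geq\nu(\lambda(E))$ from Lemma~\ref{lem:P-properties}\ref{lem:P-properties:adjoint} and $\lambda(E)\subseteq\Lambda(E)$, use it at $1$ to get $|\lambda'(1)|\leq 1$, then run the symmetric construction at $-1$ with $\widetilde\lambda=\sigma^{-1}\circ\tau^{-1}$ (your $\mu$) to get the reverse inequality via $\lambda'(1)\widetilde\lambda'(-1)=1$. Your verification that $\mu(E)\subseteq\Lambda(E)$ and the chain-rule computation $|\mu'(-1)|=1/|\lambda'(1)|$ are both correct.

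The one place you work harder than necessary is the containment $\lambda(J)\subseteq J$. Your Taylor-expansion step only yields this after shrinking $J$, which does not quite match the lemma as stated (where $J=U\cap\T_+$ is already fixed). The paper sidesteps this entirely: for \emph{any} arc $E\subseteq J$ with endpoint $1$, both $E$ and $\lambda(E)$ are arcs in $\T_+$ with endpoint $1$, so one must contain the other; the inequality $\nu(E)\geq\nu(\lambda(E))$ together with $f>0$ a.e.\ then forces $\lambda(E)\subseteq E$. Applying this with $E=J$ gives $\lambda(J)\subseteq J$ directly, and applying it with arbitrarily small $E$ gives $|\lambda'(1)|\leq 1$ --- so both conclusions fall out of the same observation, with no expansion needed.
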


\begin{proof}
Take an arc $E\subseteq J$ with endpoint $1$. Then by Lemma
\ref{lem:P-properties}\ref{lem:P-properties:adjoint}, we see that
\[
\jap{f,\1_E} = \jap{\bT f,\1_E}
= \jap{f,\1_{\Lambda(E)}}
\geq\jap{f,\1_{\lambda(E)}}.
\label{eq:lambda}
\]
Observe that since $\lambda(1)=1,$ we must have either $\lambda(E)\subseteq E$
or $E\subsetneq\lambda(E)$; since $f>0$ almost everywhere, \eqref{eq:lambda}
implies it must be the former. This shows that $\lambda(J)\subseteq J$ and
$|\lambda'(1)|\leq1$.

Let us temporarily introduce $\wt\lambda=\sigma^{-1}\circ\tau^{-1}$. This is
well-defined and conformal on some neighbourhood of $-1$ and satisfies
$\wt\lambda(-1)=-1$. Then the same arguments above also imply that
$|\wt\lambda'(-1)|\leq1$. However a simple computation shows that
$\lambda'(1)\wt\lambda'(-1)=1$, and so we must in fact have
$|\lambda'(1)|=|\wt\lambda'(-1)|=1$.
\end{proof}

\begin{lemma}\label{lem:local}
Let $f\in L^1(\T_+)$ be supported on the arc $\lambda(J)$. Then for almost
every $\zeta\in J$ we have
$$
\bT f(\zeta)= |\lambda'(\zeta)|f(\lambda(\zeta)).
$$
\end{lemma}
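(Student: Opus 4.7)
The plan is to unwind the composition $\bT f = \1_{\T_+} T_\vartheta(\1_{\T_-} T_\varphi f)$ in two stages, using formula \eqref{eq:T-op-AC} at each step and tracking how the local invariants $\tau$ and $\sigma$ identify the contributing preimages.

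First, I would compute $g := \1_{\T_-} T_\varphi f$. Since $\varphi$ is univalent on $\T_-$, \eqref{eq:T-op-AC} gives for $\eta \in \T_-$
$$
T_\varphi f(\eta) = |\varphi'(\eta)| \int_{\T_+} f \, d\nu_{\varphi(\eta)}.
$$
Because $f$ is supported on $\lambda(J) = \sigma(\tau(J))$, only atoms of $\nu_{\varphi(\eta)}$ located in $\lambda(J)$ contribute. I claim the only such atom is $\sigma(\eta)$, which appears iff $\eta \in \tau(J)$: any $\xi \in \lambda(J)$ with $\varphi(\xi) = \varphi(\eta)$ can be written $\xi = \sigma(\eta_0)$ with $\eta_0 \in \tau(J)$, and $\varphi \circ \sigma = \varphi$ then yields $\varphi(\eta_0) = \varphi(\eta)$, which forces $\eta_0 = \eta \in \tau(J)$ by univalence of $\varphi$ on $\T_-$. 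Combining property \ref{prop:AC-atom} for the atomic mass with the chain rule applied to $\varphi \circ \sigma = \varphi$ — which yields $|\sigma'(\eta)| = |\varphi'(\eta)|/|\varphi'(\sigma(\eta))|$ — I obtain
$$
g(\eta) = \1_{\tau(J)}(\eta)\, |\sigma'(\eta)|\, f(\sigma(\eta)) \qquad \text{for a.e. } \eta \in \T_-.
$$

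Second, I would apply $T_\vartheta$ to $g$ in the same way: for $\zeta \in \T_+$,
$$
T_\vartheta g(\zeta) = |\vartheta'(\zeta)| \int_{\T_-} g \, d\mu_{\vartheta(\zeta)}.
$$
Since $g$ is supported in $\tau(J)$, an analogous preimage-tracking argument using univalence of $\vartheta$ on $\T_+$ together with $\vartheta \circ \tau = \vartheta$ shows that the only preimage of $\vartheta(\zeta)$ in $\tau(J)$ is $\tau(\zeta)$, and this exists precisely when $\zeta \in J$. Consequently, for a.e. $\zeta \in J$,
$$
\bT f(\zeta) = \frac{|\vartheta'(\zeta)|}{|\vartheta'(\tau(\zeta))|}\, |\sigma'(\tau(\zeta))|\, f(\lambda(\zeta)) = |\tau'(\zeta)|\, |\sigma'(\tau(\zeta))|\, f(\lambda(\zeta)) = |\lambda'(\zeta)|\, f(\lambda(\zeta)),
$$
after differentiating $\vartheta \circ \tau = \vartheta$ and $\lambda = \sigma \circ \tau$.

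The main — and rather modest — obstacle is bookkeeping: one has to check that no stray preimages contaminate either of the two integrals. This is handled entirely by the univalence hypotheses on $\vartheta|_{\T_+}$ and $\varphi|_{\T_-}$ combined with the local-invariant identities $\vartheta \circ \tau = \vartheta$ and $\varphi \circ \sigma = \varphi$; if needed, one shrinks $U$ so that $\tau(J) \cup \lambda(J)$ avoids $\scrS(\vartheta) \cup \scrS(\varphi)$ to guarantee that the AC measures restrict to pure atoms on these arcs, after which the computation reduces to the chain-rule simplifications displayed above.
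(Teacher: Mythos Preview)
Your proof is correct and follows essentially the same approach as the paper: both arguments unwind $\bT$ as a double integral against the AC measures, use univalence of $\varphi|_{\T_-}$ and $\vartheta|_{\T_+}$ together with the local-invariant identities to isolate the single contributing atom at each stage, and finish with the chain-rule identities $|\vartheta'(\zeta)|/|\vartheta'(\tau(\zeta))|=|\tau'(\zeta)|$ and $|\varphi'(\eta)|/|\varphi'(\sigma(\eta))|=|\sigma'(\eta)|$. The only cosmetic difference is that you first compute the intermediate function $g=\1_{\T_-}T_\varphi f$ explicitly before applying $T_\vartheta$, whereas the paper carries the double integral through in one display.
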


\begin{proof}
Fix $\zeta\in J$. Since $\vartheta(\zeta)=\vartheta(\tau(\zeta))$ and $\vartheta$ is
analytic across $\tau(\zeta)$ we have 
$$
\mu_{\vartheta(\zeta)}|_{\tau(J)}
=
\frac{\delta_{\tau(\zeta)}}{|\vartheta'(\tau(\zeta))|}.
$$
Similarly, for $\eta\in\tau(J)$,
$$
\nu_{\varphi(\eta)}|_{\lambda(J)}
=
\frac{\delta_{\sigma(\eta)}}{|\varphi'(\sigma(\eta))|}.
$$
Moreover, since $\varphi$ is univalent on $\T_-$ and $\varphi(\tau(J))=
\varphi(\lambda(J))$, if $\eta\in\T_-\setminus\tau(J)$ then
$\varphi(\eta)\notin\varphi(\lambda(J))$. Hence $\nu_{\varphi(\eta)}|_{\lambda(J)}=0$
in this case. Then for $f\in L^1(\T_+)$ with $\supp f\subseteq\lambda(J)$, we
have $\supp (f\circ\sigma)\subseteq\tau(J)$, and therefore
\begin{align*}
\bT f(\zeta)
&=
|\vartheta'(\zeta)|\int_{\T_-} |\varphi'(\eta)|\int_{\T_+}
f \dm\nu_{\varphi(\eta)} \dm\mu_{\vartheta(\zeta)}(\eta) \\
&=
|\vartheta'(\zeta)|\int_{\T_-} |\sigma'(\eta)|
f(\sigma(\eta)) \dm\mu_{\vartheta(\zeta)}(\eta) \\
&=
|\tau'(\zeta)| |\sigma'(\tau(\zeta))|f(\sigma(\tau(\zeta)) \\
&=
|\lambda'(\zeta)|f(\lambda(\zeta)).
\end{align*}
\end{proof}


\section{Proof of Theorem \ref{thm:main}}\label{sec:span-L}

\subsection{Absence of analytic fixed points}\label{subsec:analytic}

The proof of Theorem \ref{thm:main} will be complete once we show that $\bT$
has no non-zero fixed points in $L^1(\T_+)$. We begin by showing that
$\bT$ has no non-zero \emph{analytic} fixed points in $L^1(\T_+)$.

\begin{proposition}\label{prop:analytic}
Let $f\in L^1(\T_+)$ be analytic on $\T_+$. If $\bT (f)=f$, then $f=0$.
\end{proposition}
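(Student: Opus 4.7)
I would argue by contradiction: suppose $f \in L^1(\T_+)$ is analytic on $\T_+$, satisfies $\bT f = f$, and is not identically zero. The plan is to reduce to a uniform lower bound on $|f|$, then encode the fixed-point equation via the generalised counting function machinery of Section \ref{sec:nevanlina}, and finally extract a contradiction from the Littlewood-type growth bound of Lemma \ref{lem:counting}\ref{lem:counting1}.

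First, by Lemma \ref{lem:P-properties}\ref{lem:P-properties:modulus} the modulus $|f|$ is itself a fixed point of $\bT$. Since $f$ is analytic on $\T_+$ and not identically zero, Lemma \ref{lem:bdd-below}\ref{lem:bdd-below:analytic} yields an integer $n\geq 1$ with $\inf_{\T_+}\bT^n|f| > 0$; combined with $\bT^n|f| = |f|$, this gives $|f|\geq c > 0$ on $\T_+$ for some positive constant $c$.

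The core of the argument is to build a non-negative function $u$ on $\D$, harmonic off a discrete pole set $S\subseteq\D_+$ and satisfying the growth condition $u(z)\leq -C\log|z|$ on $\D_-$, whose boundary derivative $\partial u$ on $\T_-$ equals $\1_{\T_-}T_\varphi|f|$. A natural candidate is the logarithmic potential $u(z) = -\int_{\D_+}\log|\omega_w(z)|\,d\sigma(w)$, where $\sigma$ is a positive measure on $\D_+$ chosen so that its Poisson sweep $P[\sigma]$ matches the prescribed boundary value on $\T_-$; the analyticity of $f$ together with the continuity of $\varphi$ at $\pm 1$ ensures that $T_\varphi|f|$ is sufficiently regular on $\T_-$ to realise this construction and to make the growth bound on $\D_-$ hold. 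With $u$ in hand, Lemma \ref{lem:counting}\ref{lem:counting3} provides the pointwise identity $\partial\calN_{\vartheta,u}(\zeta) = T_\vartheta(\partial u)(\zeta) = T_\vartheta(\1_{\T_-}T_\varphi|f|)(\zeta) = \bT|f|(\zeta) = |f(\zeta)|$ for $\zeta\in\T_+$, while Lemma \ref{lem:counting}\ref{lem:counting1} gives $\calN_{\vartheta,u}(z)\leq -C\log|z|$ on $\D_+$, and hence an upper bound on $|f|$ over $\T_+$ expressible in terms of $\sup_{\T_-}T_\varphi|f|$.

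To close the argument, I would iterate this comparison via the dual construction with $\varphi$ and $\T_-$ in the role of $\vartheta$ and $\T_+$, and exploit the hypothesis $\deg \vartheta\geq 3$ together with Lemma \ref{lem:log-equiv} to extract a contraction factor strictly less than one---producing a strict upper bound $|f| < c$ and contradicting the lower bound from the first step. The main obstacle is the rigorous realisation of the measure $\sigma$ in the middle step: one needs $P[\sigma]$ to reproduce a prescribed non-negative function on $\T_-$ while keeping the growth of $u$ in $\D_-$ sharply controlled by $-C\log|z|$. Here the analyticity of $f$ is expected to be decisive, since it forces the regularity on $T_\varphi|f|$ that makes a clean potential-theoretic construction possible; this is also where the continuity of $\vartheta,\varphi$ at $\pm 1$ (and hence the control of boundary behaviour at the endpoints of $\T_+$) enters in an essential way.
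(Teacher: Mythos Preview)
Your first step is correct and matches the paper: from $\bT|f|=|f|$ and Lemma~\ref{lem:bdd-below}\ref{lem:bdd-below:analytic} one indeed obtains $|f|\geq c>0$ on $\T_+$.

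The remainder of your plan, however, diverges from the paper and contains a genuine gap. The paper does \emph{not} use the counting-function machinery of Section~\ref{sec:nevanlina} to prove Proposition~\ref{prop:analytic}; that machinery is reserved for Proposition~\ref{prop:ergodic}, where it is applied to the explicit seed $u_0(z)=-\log|z|$ (so that $\partial u_0=\1$), not to an arbitrary fixed point. Instead, the paper splits into two cases. If $\vartheta(\T_+)\neq\T$ or $\varphi(\T_-)\neq\T$, then $\Lambda(\T_+)\subsetneq\T_+$ (Lemma~\ref{lem:full-measure}), and the identity $\jap{|f|,\1}=\jap{|f|,\1_{\Lambda(\T_+)}}$ forces $f$ to vanish on a set of positive measure, whence $f=0$ by analyticity. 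In the remaining case one has local invariants $\tau,\sigma$ with $\lambda=\sigma\circ\tau$ fixing $1$, and Lemma~\ref{lem:neutral-fp} gives $|\lambda'(1)|=1$. Iterating the local formula of Lemma~\ref{lem:local} produces the lower bound $|f(\zeta)|\geq C\sum_{j=0}^{N-1}|(\lambda^{\jap{j}})'(\zeta)|$ near $1$; integrating and invoking the elementary Lemma~\ref{lem:calculus} (a neutral fixed point forces $\sum h^{\jap{j}}(\delta)\gtrsim\log N$) yields $\|f\|_{L^1}\geq C\log N$ for all $N$, a contradiction.

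Your proposed contraction argument cannot succeed as stated. Even if the potential $u$ could be realised (and you rightly flag this as the main obstacle---there is no canonical lift of a prescribed boundary derivative on $\T_-$ to a non-negative harmonic function on $\D$ with poles in $\D_+$), Lemma~\ref{lem:counting}\ref{lem:counting1} only yields $\calN_{\vartheta,u}(z)\leq -C\log|z|$ with a constant $C$ inherited from the bound on $u$ in $\D_-$; passing to $\partial$ gives $|f|\leq C$, but the constant does not shrink under iteration. Lemma~\ref{lem:log-equiv} is an \emph{equivalence} $-\log|\vartheta(z)|\approx-\log|z|$, not a one-sided strict inequality, and $\deg\vartheta\geq3$ enters there only to ensure univalence across $(-1,1)$, not to produce a gap. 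Since $\bT$ genuinely has operator norm $1$ on $L^1(\T_+)$, no argument of the form ``iterate a global bound to force a strict contraction'' can work; what actually rules out fixed points is the delicate endpoint dynamics captured by Lemmas~\ref{lem:neutral-fp}--\ref{lem:calculus}.
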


To simplify notation, throughout \S\ref{subsec:analytic} we will
adopt the convention that all set equalities are up to null sets -- that is,
for Lebesgue measurable sets $A,B\subseteq\T$ we write $A=B$ if the symmetric
difference of $A$ and $B$ has Lebesgue measure zero.

\begin{lemma}\label{lem:full-measure}
$\Lambda(\T_+) = \T_+$ if and only if $\vartheta(\T_+)=\varphi(\T_-)=\T$.
\end{lemma}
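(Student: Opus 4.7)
For the forward direction I would just chase definitions: if $\vartheta(\T_+) = \varphi(\T_-) = \T$ (up to null sets), then since $\vartheta$ is inner $\vartheta^{-1}(\vartheta(\T_+)) = \T$, so $\vartheta^{-1}(\vartheta(\T_+)) \cap \T_- = \T_-$; similarly $\varphi^{-1}(\varphi(\T_-)) = \T$, and hence $\Lambda(\T_+) = \T \cap \T_+ = \T_+$.

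For the converse, the plan is to convert $\Lambda(\T_+) = \T_+$ into an extremal measure identity. Set $A = \vartheta^{-1}(\vartheta(\T_+)) \cap \T_-$ and $p = |\vartheta(\T_+)|$. Using that $\vartheta$ and $\varphi$ are inner with $\vartheta(0)=\varphi(0)=0$ (so $|\vartheta^{-1}(E)| = |\varphi^{-1}(E)| = |E|$), together with the univalence of $\vartheta|_{\T_+}$ and $\varphi|_{\T_-}$, I compute
\[
|A| = p - \pi \quad\text{and}\quad |\Lambda(\T_+)| = |\varphi^{-1}(\varphi(A)) \cap \T_+| = |\varphi(A)| - |A|,
\]
so $\Lambda(\T_+) = \T_+$ is equivalent to $|\varphi(A)| = p$; the change-of-variables formula for the univalent map $\varphi|_A$ then turns this into the integral identity $\int_A(|\varphi'| - 1)\,d\ell = \pi$.

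The key ingredient is then the pointwise lower bound $|\varphi'| \geq 1$ on $\T_-$. For any subinterval $V \subseteq \T_-$, measure-preservation gives $|\varphi(V)| = |\varphi^{-1}(\varphi(V))| \geq |V|$, while univalence gives $|\varphi(V)| = \int_V |\varphi'|\,d\ell$; since $\varphi$ is analytic across $\T_-$ (so $|\varphi'|$ is continuous there), this forces $|\varphi'| \geq 1$ pointwise. Combined with the identity above, $\pi = \int_A(|\varphi'| - 1)\,d\ell \leq \int_{\T_-}(|\varphi'| - 1)\,d\ell = |\varphi(\T_-)| - \pi$, so $|\varphi(\T_-)| \geq 2\pi$ and hence $\varphi(\T_-) = \T$, giving one of the two desired conclusions; it also forces $\int_B(|\varphi'| - 1)\,d\ell = 0$ on $B := \T_- \setminus A$, whence $|\varphi'| \equiv 1$ a.e.\ on $B$.

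The final---and only non-routine---step is to deduce $|B| = 0$, which is the same as $\vartheta(\T_+) = \T$. The main obstacle will be to go from ``$|\varphi'| = 1$ on a set of positive measure'' to a contradiction: I expect to use that $|\varphi'|^2$ is real-analytic on the open arc $\T_-$ (since $\varphi$ is analytic across $\T_-$), so by the identity theorem $|\varphi'| = 1$ on a set of positive measure already implies $|\varphi'| \equiv 1$ on all of $\T_-$. But then $|\varphi(\T_-)| = \int_{\T_-} 1\,d\ell = \pi$, contradicting the already-established $|\varphi(\T_-)| = 2\pi$. Hence $|B| = 0$, finishing the proof.
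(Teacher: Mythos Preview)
Your argument is correct, but it is considerably more elaborate than the paper's. The paper proceeds by a purely set-theoretic chase: since $\deg\varphi\geq2$ and $\varphi$ is univalent on $\T_-$, almost every point of $\T$ has at least one $\varphi$-preimage in $\T_+$, so $\varphi(\T_+)=\T$; then for $E\subseteq\T_-$ one has $\varphi^{-1}(\varphi(E))\cap\T_+=\T_+$ iff $\varphi(E)\supseteq\varphi(\T_+)=\T$, which by univalence on $\T_-$ forces $E=\T_-$ and $\varphi(\T_-)=\T$. Applying this with $E=\vartheta^{-1}(\vartheta(\T_+))\cap\T_-$ and repeating the reasoning for $\vartheta$ finishes the proof in a few lines.

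Your route instead converts everything to measures via the measure-preserving property of inner functions fixing the origin, establishes the pointwise bound $|\varphi'|\geq1$ on $\T_-$, and finishes with the identity principle for real-analytic functions. All steps are sound (note, incidentally, that $|\varphi'|\geq1$ also drops out immediately from the explicit formula for the angular derivative since $0$ is a zero of $\varphi$). The upside of your approach is that it never explicitly invokes the degree hypothesis, and the extremal equality $\int_{\T_-}(|\varphi'|-1)=\pi$ is a pleasant byproduct. The paper's approach, on the other hand, is shorter and avoids the analyticity/identity-theorem machinery entirely by exploiting the one qualitative fact $\varphi(\T_+)=\T$ at the outset.
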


\begin{proof}
Recall that
$
\Lambda(\T_+) = \varphi^{-1}(\varphi(\vartheta^{-1}(\vartheta(\T_+))\cap\T_-))\cap\T_+.
$
Observe that since $\varphi$ is univalent on $\T_-$ and $\deg\varphi\geq2$ we must
have $\varphi(\T_+)=\T$. It follows that for $E\subseteq\T_-$,
$\varphi^{-1}(\varphi(E))\cap\T_+=\T_+$ if and only if $\varphi(E)=\T$, and since
$\varphi$ is univalent on $\T_-$, this holds if and only if $E=\T_-$ and
$\varphi(\T_-)=\T$. Taking $E=\vartheta^{-1}(\vartheta(\T_+))\cap\T_-$ we see
that $\Lambda(\T_+) = \T_+$ if and only if $\varphi(\T_-)=\T$ and
$\vartheta^{-1}(\vartheta(\T_+))\cap\T_-=\T_-$. Then the same argument above
shows that the second equality holds if and only if $\vartheta(\T_+)=\T$.
\end{proof}

Before continuing, we introduce the following notation: for a set $S$ and
a function $f:S\to S$ we denote the $n\textsuperscript{th}$ iterate of $f$
by $f^{\jap{n}}$, i.e
$$
f^{\jap{n}} = \underbrace{f\circ f\circ\cdots\circ f}_{\text{$n$ terms}}.
$$

\begin{lemma}\label{lem:calculus}
Fix $\beta>0$. Let $f:[0,\beta)\to[0,\beta)$ be an increasing analytic
function such that $f(0)=0$ and $f'(0)=1$. Then for each $0<t<\beta$, there
exists $C=C(t)>0$ such that for each $N\geq2$,
$$
\sum_{n=1}^N f^{\jap{n}}(t) \geq C \log N.
$$
\end{lemma}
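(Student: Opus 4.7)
The plan is to analyse the orbit $x_n := f^{\jap{n}}(t)$ according to its long-term behaviour. Since $f$ is monotone increasing on $[0,\beta)$, the sequence $\{x_n\}$ is itself monotone (the sign of $x_1-x_0$ is preserved by iterating a monotone map), and being bounded in $[0,\beta)$ it converges to some $x^{\ast}\in[0,\beta]$. If $x^{\ast}>0$ (including the case where $x_n$ increases to $\beta$), then monotonicity forces $x_n\geq\min(t,x^{\ast})=:\delta>0$ for every $n$, so
\[
\sum_{n=1}^{N}x_n \geq N\delta \geq \frac{\delta}{\log 2}\,\log N,
\]
and the lemma follows immediately.

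The genuine case is $x^{\ast}=0$, where $x_n$ decreases strictly to $0$. Since $f$ is analytic with $f(0)=0$, $f'(0)=1$, and cannot be the identity (otherwise $x_n\equiv t\not\to 0$), there exist an integer $k\geq 2$ and $a\in\R\setminus\{0\}$ with $f(x)=x-ax^{k}+O(x^{k+1})$ near $0$. The sign is forced to be $a>0$: if $a<0$ then $f(x)>x$ near $0$, which would prevent $x_n$ from decreasing to $0$. I would then apply the standard reciprocal-power trick for parabolic fixed points: setting $y_n:=x_n^{-(k-1)}$, a Taylor expansion of $(x_n-ax_n^k+O(x_n^{k+1}))^{-(k-1)}$ yields
\[
y_{n+1}-y_n=(k-1)a+O(x_n),
\]
and since $x_n\to 0$ the error is $o(1)$, so summing gives $y_n=(k-1)an\,(1+o(1))$, i.e. $x_n\sim\bigl((k-1)an\bigr)^{-1/(k-1)}$ as $n\to\infty$. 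For $k=2$ this produces $x_n\sim 1/(an)$ and $\sum_{n=1}^{N}x_n\gtrsim\log N$; for $k\geq 3$ we get $x_n\sim C\,n^{-1/(k-1)}$ and $\sum_{n=1}^{N}x_n\gtrsim N^{(k-2)/(k-1)}$, which dominates $\log N$. A constant $C=C(t)>0$ then exists, absorbing finitely many initial terms (those before the orbit enters the neighbourhood of $0$ on which the expansion is valid) and the $o(1)$ factor in the asymptotic.

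The main obstacle is the asymptotic bookkeeping in the second case: one must verify that the $O(x_n)$ error term in the expansion of $y_{n+1}-y_n$ is uniform in $n$ (this is where analyticity of $f$ on a fixed neighbourhood of $0$ enters) and that the accumulated error $\sum_{n\leq N}O(x_n)$ does not overwhelm the leading contribution $(k-1)aN$ to $y_{N+1}$. Since $x_n\to 0$, a short bootstrap (plugging the crude bound $x_n\to 0$ into the recurrence to get $y_n\sim (k-1)an$, then re-inserting) closes this estimate, but the argument must be done with some care.
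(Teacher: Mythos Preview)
Your argument is correct. The case split (orbit bounded away from $0$ versus orbit decreasing to $0$) is clean, and in the parabolic case the reciprocal-power substitution $y_n=x_n^{-(k-1)}$ together with the Ces\`aro observation that $y_{n+1}-y_n\to(k-1)a$ does indeed give $x_n\sim((k-1)an)^{-1/(k-1)}$; the bootstrap concern you flag is handled simply by noting that $x_n\to0$ already forces $\frac{1}{N}\sum O(x_n)\to0$, so no second pass is really needed.

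The paper takes a shorter and more elementary route. It never identifies the order $k$ of the first non-linear term; it uses only the crude consequence $|t-f(t)|\leq At^2$ of $f(0)=0$, $f'(0)=1$ and local $C^2$-ness. From this it extracts, by a discrete pigeonhole on the intervals $[1/j,1/(j-1))$, a uniform bound of the form: if $t\geq 1/j$ then $f(t)\geq 1/(j+K)$ for a fixed integer $K$. Iterating gives $f^{\jap{n}}(t)\geq 1/(j+nK)$ directly, and summing the harmonic-type tail yields $C\log N$. This avoids any case split (the inequality $f(t)\geq 1/(j+K)$ is vacuous when $f(t)\geq t$) and any asymptotic analysis. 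Your approach buys sharper information --- the precise decay rate $n^{-1/(k-1)}$, which for $k\geq3$ gives a polynomial lower bound far exceeding $\log N$ --- at the cost of more machinery (higher-order Taylor expansion, the parabolic fixed-point formalism). Since only the $\log N$ bound is used downstream, the paper's cruder estimate is all that is required.
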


\begin{proof}
Since $f$ is analytic, there exists $A>0$ such that $|t-f(t)|\leq At^2$ for
all sufficiently small $t>0$. Take $t>0$ such that this holds and suppose that
for some integers $j\geq 4A+1$ and $k\geq1$ we have $1/j\leq t<1/(j-1)$ and
$f(t)\leq 1/(j+k)$. Then it follows that
$$
1/j-1/(j+k) \leq t-f(t) \leq A/(j-1)^2 \leq 4A/j^2,
$$
and so $k\leq 4Aj/(j-4A)$. Since $\sup_{j\geq4A+1} 4Aj/(j-4A)$ is finite we
see that there exists $K\in\N$ such that if $t\geq1/j$ for some $j\geq 4A+1$,
then $f(t)\geq 1/(j+K)$. We conclude that for any $0<t<\beta$ and
$j\geq\max\{1/t,\,4A+1\}$ we have $f^{\jap{n}}(t) \geq 1/(j+nK)$,
from which the claim follows.
\end{proof}

\begin{proof}[Proof of Proposition \ref{prop:analytic}]
Fix $f\in L^1(\T_+)$ such that $f$ is analytic on $\T_+$ and $\bT f=f$.
Then by Lemma \ref{lem:P-properties}\ref{lem:P-properties:modulus} we also
have $\bT(|f|)=|f|$. The remainder of the proof is split into two cases.

\textbf{Case 1:} Either $\vartheta(\T_+)\neq\T$ or $\varphi(\T_-)\neq\T$.

In this case, Lemma \ref{lem:full-measure} implies that $\Lambda(\T_+)\neq\T_+$.
However, an application of Lemma
\ref{lem:P-properties}\ref{lem:P-properties:adjoint} gives
$$
\norm{f}_{L^1(\T_+)}
= \jap{|f|,\1} = \jap{\bT (|f|), \1}
= \jap{|f|,\1_{\Lambda(\T_+)}}.
$$
Therefore $f$ must vanish on $\T_+\setminus\Lambda(\T_+)$ which has positive
measure. We conclude that $f=0$.

\textbf{Case 2:} $\vartheta(\T_+)=\varphi(\T_-)=\T$.

Assume towards a contradiction that $f\neq0$. 
Let $\lambda$ and $J$ be as in \S\ref{subsec:endpoints} and let $J_0$ be an
arc satisfying the conclusion of Lemma \ref{lem:bdd-below}\ref{lem:bdd-below:arc}. 
By taking $J$ sufficiently small, we can suppose that 
$J_0\subseteq\T_+\setminus\lambda(J)$.
Fix $\zeta\in J$ and consider the following expansion for $|f(\zeta)|$, which
is a consequence of Lemma \ref{lem:local}:
$$
|f(\zeta)|
=
|\lambda'(\zeta)||f(\lambda(\zeta))|
+
\bT \wt f(\zeta),
$$
where $\wt f=|f|\1_{\T_+\setminus\lambda(J)}$.
By Lemma \ref{lem:neutral-fp}, $\lambda(\zeta)\in J$, and so we can substitute
the analogous expansion for $|f(\lambda(\zeta))|$ into the first term on the
right hand side to get
$$
|f(\zeta)|
=
|(\lambda^{\jap{2}})'(\zeta)||f(\lambda^{\jap{2}}(\zeta))|
+
|\lambda'(\zeta)|\bT \wt f(\lambda(\zeta))
+
\bT \wt f(\zeta).
$$
Iterating this process we see that for all $N\geq1$ we have
$$
|f(\zeta)|
=
|(\lambda^{\jap{N}})'(\zeta)||f(\lambda^{\jap{N}}(\zeta))|
+
\sum_{j=0}^{N-1}
|(\lambda^{\jap{j}})'(\zeta)|\bT \wt f(\lambda^{\jap{j}}(\zeta)).
$$
By Lemma
\ref{lem:bdd-below}\ref{lem:bdd-below:analytic} we must have
$|f|\geq C\1$. Moreover, since $J_0\subseteq\T_+\setminus\lambda(J)$,
we have $\wt f\geq C\1_{J_0}$. Then Lemma
\ref{lem:bdd-below}\ref{lem:bdd-below:arc} implies that $\bT \wt f\geq C\1$.
We conclude that for each $N\geq1$,
\[
|f(\zeta)|
\geq
\sum_{j=0}^{N-1}
|(\lambda^{\jap{j}})'(\zeta)|\bT \wt f(\lambda^{\jap{j}}(\zeta))
\geq
C\sum_{j=0}^{N-1}
|(\lambda^{\jap{j}})'(\zeta)|.
\label{eq:lower-bound}
\]

Write $J=\{e^{it}:0<t<\beta\}$ and define $h:[0,\beta)\to[0,\beta)$ by
$h(t)=-i\log\lambda(e^{it})$. One easily checks that $h$ satisfies the
conditions of Lemma \ref{lem:calculus} and $(h^{\jap{j}})'(t)=
|(\lambda^{\jap{j}})'(e^{it})|$. Fix $0<\delta<\beta$. We
now conclude from \eqref{eq:lower-bound} and Lemma \ref{lem:calculus} that
for each $N\geq2$,
\begin{multline*}
\norm{f}_{L^1(\T_+)}
\geq
\frac{1}{2\pi}\int_0^{\delta} |f(e^{it})|\dm t
\geq
C\sum_{j=0}^{N-1}\int_0^{\delta}
|(\lambda^{\jap{j}})'(e^{it})| \dm t \\
=
C\sum_{j=0}^{N-1} h^{\jap{j}}(\delta)
\geq
C \log N.
\end{multline*}
This contradiction shows that we must in fact have $f=0$.
\end{proof}

\subsection{Decay of the Ces\`{a}ro averages}

We will complete the proof of Theorem \ref{thm:main} by showing that the
Ces\`{a}ro averages of $\bT$ converge to zero in a suitably weak sense,
and therefore $\bT$ cannot have a non-zero fixed point.

\begin{proposition}\label{prop:ergodic}
For each $f\in L^1(\T_+)$, and each $\phi\in C(\T_+)$ with compact support,
we have
$$
\frac{1}{N}\sum_{j=0}^{N-1} \jap{\bT^j f,\phi}
\to 0
\quad\text{as}\quad N\to\infty.
$$
\end{proposition}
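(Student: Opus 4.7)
The plan is to extract a weak$^*$ cluster point of the Ces\`aro averages $g_N=\tfrac{1}{N}\sum_{j=0}^{N-1}\bT^jf$ and to reduce the conclusion to the absence of analytic fixed points furnished by Proposition \ref{prop:analytic}.

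First, by splitting $f$ and $\phi$ into real and imaginary parts, then into positive and negative parts, and using the pointwise bound $|\bT^jf|\leq\bT^j|f|$ from Lemma \ref{lem:P-properties}\ref{lem:P-properties:triangle}, it suffices to treat non-negative $f\in L^1(\T_+)$ and non-negative $\phi\in C(\T_+)$ of compact support. Then each $g_N$ is non-negative, and by Lemma \ref{lem:P-properties}\ref{lem:P-properties:contraction} satisfies $\|g_N\|_{L^1(\T_+)}\leq\|f\|_{L^1(\T_+)}$. Regarding the $g_N$ as positive Borel measures on the compact arc $\clos{\T_+}$ with uniformly bounded total mass, the Banach-Alaoglu theorem produces a subsequence $g_{N_k}$ converging weak$^*$ in $M(\clos{\T_+})$ to some positive measure $\mu$. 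From the identity
$$
\bT g_N-g_N=\tfrac{1}{N}(\bT^Nf-f)\longrightarrow 0\quad\text{in }L^1(\T_+),
$$
together with Lemma \ref{lem:P-properties}\ref{lem:P-properties:cts} (which implies that $\bT^*$ maps $C(\clos{\T_+})$ into itself), one verifies that $\mu$ is $\bT$-stationary:
$$
\int\bT^*\psi\dm\mu=\int\psi\dm\mu\quad\text{for every }\psi\in C(\clos{\T_+}).
$$

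Write the Lebesgue decomposition $\mu=h\dm\zeta+\mu_s$ with $h\in L^1(\T_+)$ and $\mu_s$ singular with respect to arc length. A standard argument exploiting the positivity of $\bT$ shows that the absolutely continuous and singular parts of a $\bT$-stationary measure are separately stationary; in particular $h$ is a genuine non-negative $\bT$-fixed point in $L^1(\T_+)$, and $\mu_s$ is $\bT$-invariant. To conclude, split
$$
\int_{\T_+}\phi\dm\mu=\int_{\T_+}h\phi\dm\zeta+\int_{\T_+}\phi\dm\mu_s.
$$
The singular term vanishes because the $\bT$-invariant singular mass is forced to concentrate at the neutral fixed points $\{-1,1\}$ of the local map $\lambda$ (cf.\ Lemma \ref{lem:neutral-fp}), a set disjoint from $\supp\phi$. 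The absolutely continuous term vanishes by Proposition \ref{prop:analytic} applied to $h$, once one verifies that $h$ is real-analytic on $\T_+$; this regularity should follow from the Poisson-type representation of $\bT$ in terms of the AC measures $\mu_{\vartheta(\zeta)}$ and $\nu_{\varphi(\eta)}$, whose dependence on the parameter is analytic on $\T_+\setminus\scrS(\vartheta)$ and $\T_-\setminus\scrS(\varphi)$, combined with the generalised counting-function machinery of Section \ref{sec:nevanlina}. Since the argument applies to every weak$^*$ cluster point of $\{g_N\}$, the full sequence $\langle g_N,\phi\rangle$ tends to $0$.

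The principal obstacle is exactly this final analyticity step, namely upgrading an a priori merely integrable $\bT$-invariant function to one regular enough to trigger Proposition \ref{prop:analytic}. An alternative route would be a Hopf-type decomposition of $\T_+$ into conservative and dissipative parts for $\bT$: the dissipative part yields Ces\`aro decay of $L^1$-norms by elementary means, while the conservative part must be shown, under the hypotheses of Theorem \ref{thm:main}, to support no non-trivial $\bT$-invariant function -- again by reducing to Proposition \ref{prop:analytic}.
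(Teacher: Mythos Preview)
Your proposal has a genuine gap at exactly the point you yourself identify: the analyticity of the limiting fixed point $h$. You write that this ``should follow'' from the AC-measure representation and the counting-function machinery of Section~\ref{sec:nevanlina}, but you give no argument, and in fact there is no obvious way to upgrade an arbitrary $L^1$ fixed point of $\bT$ to an analytic one after the fact. The claim that the singular part $\mu_s$ must concentrate at $\{\pm1\}$ is also unsupported; Lemma~\ref{lem:neutral-fp} is a statement about the existence of a strictly positive $L^1$ fixed point, not about singular invariant measures, and nothing in the paper controls the support of such measures. Finally, your appeal to Lemma~\ref{lem:P-properties}\ref{lem:P-properties:cts} to pass to the weak-$^*$ limit is mislabelled: that lemma says $\bT$ preserves $C(\clos{\T_+})$, whereas what you need is that the $L^\infty$-adjoint of $\bT$ does so, which is a different operator.

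The paper's route avoids all of this by establishing analyticity \emph{a priori} rather than a posteriori. The key step (Lemma~\ref{lem:normal}) is to iterate the generalised counting functions of Section~\ref{sec:nevanlina}: starting from $u_0(z)=-\log|z|$, one alternately applies $\calN_{\varphi,\cdot}$ and $\calN_{\vartheta,\cdot}$ to produce harmonic functions $u_j$ on $\D_+$ with $\partial u_j=\bT^j\1$ on $\T_+$ (this is exactly Lemma~\ref{lem:counting}\ref{lem:counting3} applied repeatedly). Reflecting and taking analytic completions yields functions $F_j$ on $\C_+$ with $F_j|_{\T_+}=\bT^j\1$; uniform bounds on the $u_j$ coming from Lemma~\ref{lem:counting}\ref{lem:counting1} show the convex hull of $\{F_j\}$ is a normal family. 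Krengel's stochastic ergodic theorem then gives convergence in measure of the Ces\`aro averages to a fixed point $F$, and normality forces $F$ to be analytic, so Proposition~\ref{prop:analytic} kills it. The general case follows by the pointwise domination $|\bT^jf|\le\|f\|_\infty\,\bT^j\1$ and density of $L^\infty$ in $L^1$. The counting-function machinery is therefore not a tool for regularising a limit you have already extracted, but the mechanism by which each individual $\bT^j\1$ is realised as the boundary trace of an explicit analytic function.
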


\begin{lemma}\label{lem:normal}
There exists a sequence of analytic functions $\{F_j\}$, defined on $\C_+$, such
that the following hold:
\begin{enumerate}[label=(\alph*), topsep=1pt, itemsep=2pt]
\item\label{lem:normal:1}
for each $j\geq0$, $F_j=\bT^j\1$ almost everywhere on $\T_+$;
\item\label{lem:normal:2}
the convex hull of $\{F_j\}$ is a normal family.
\end{enumerate}
\end{lemma}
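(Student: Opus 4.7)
The plan is to construct the analytic functions $F_j$ inductively as explicit analytic continuations of $\bT^j \1$ from $\T_+$ into $\C_+$ (identifying $\D_+$ with $\C_+$ via a fixed conformal map), and then to deduce uniform local bounds on convex combinations from positivity and the $L^1$-contractivity of $\bT$.

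I would build $F_j$ recursively starting from $F_0 \equiv 1$. The key observation is that $\bT$ preserves analyticity across $\T_+$: in the finite-Blaschke case, Example~\ref{ex:blaschke} gives the closed form $\bT g(\zeta) = \sum_{\gamma \in \Gamma_1} |\gamma'(\zeta)|\, g(\gamma(\zeta))$, where each $\gamma$ is a composition of local invariants for $\vartheta$ and $\varphi$ and extends analytically in a neighborhood of $\T$; the modulus $|\gamma'(\zeta)|$ on $\T$ admits the analytic extension $z \mapsto z\gamma'(z)/\gamma(z)$ coming from Schwarz reflection for maps preserving $\T$. Hence $\bT$ sends boundary values of analytic functions to boundary values of analytic functions, and iteration produces $F_j$ analytic in a neighbourhood of $\T_+$. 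To obtain a \emph{global} analytic extension to $\D_+$, I would appeal to the Nevanlinna-type counting-function machinery of Section~\ref{sec:nevanlina}: by Lemma~\ref{lem:counting}\ref{lem:counting2}, the generalised counting function associated to $\vartheta$ extends as a positive harmonic function on $\D_+$ away from an exceptional set contained in $\overline{\D_-}$, and by Lemma~\ref{lem:counting}\ref{lem:counting3} its boundary trace on $\T_+$ is $T_\vartheta$ applied to the trace of $u$. Iterating this construction (alternating $\vartheta$ and $\varphi$) gives a harmonic extension of $\bT^j \1$ to $\D_+$; a harmonic conjugate then yields the analytic function $F_j$. The non-Blaschke case would be handled by approximating $\vartheta$ and $\varphi$ by their finite partial Blaschke products (as in the proof of Lemma~\ref{lem:bdd-below}\ref{lem:bdd-below:analytic}), passing to the limit via Lemma~\ref{lem:cf-convergence}.

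For the normality of the convex hull, given $G = \sum c_j F_j$ with $c_j \geq 0$ and $\sum c_j = 1$, iteration of Lemma~\ref{lem:P-properties}\ref{lem:P-properties:positive},\ref{lem:P-properties:contraction} yields $G \geq 0$ on $\T_+$ with $\|G\|_{L^1(\T_+)} \leq \pi$. Since $G$ is analytic on $\D_+$ with nonnegative $L^1$-bounded boundary values on $\T_+$ (and controlled on the complementary diameter $(-1,1)$ inherited from the construction), the Poisson integral representation of the real part together with a Herglotz-type bound on the conjugate function would give $|G(z)| \leq C(K)$ uniformly on each compact $K \subset \D_+$, with constant independent of the convex coefficients.

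The main obstacle is making the globalisation step precise for general inner functions: specifically, ensuring that the harmonic extension produced by the counting-function construction really coincides with $\bT^j \1$ on $\T_+$, and controlling its behaviour near $\pm 1$, where $\vartheta$ and $\varphi$ are only continuous rather than analytic. The continuity hypothesis~\ref{thm:main:cond2} combined with the boundary identification in Lemma~\ref{lem:counting}\ref{lem:counting3} should suffice, but assembling multiple counting functions to realise iterated $\bT$ on all of $\D_+$ and verifying enough regularity on the complementary arc $(-1,1)$ to justify the Poisson bound is the delicate part.
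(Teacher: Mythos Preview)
Your strategy of iterating the generalised counting functions of Section~\ref{sec:nevanlina} is precisely the paper's route, but there is a genuine gap in how you pass from the harmonic data to the analytic functions $F_j$. Lemma~\ref{lem:counting}\ref{lem:counting3} says $\partial\calN_{\vartheta,u}=T_\vartheta\,\partial u$ on $\T_+$, where $\partial$ denotes the limit of $u(z)/(-\log|z|)$, \emph{not} the ordinary boundary value. Since the iterated counting functions $u_j$ satisfy $0\le u_j(z)\le -C\log|z|$ on $\D_+$ by Lemma~\ref{lem:counting}\ref{lem:counting1}, their boundary values on $\T_+$ are identically zero. Consequently, if $f_j$ is analytic with $\Re f_j=u_j$, then $f_j$ is purely imaginary on $\T_+$; it does not equal $\bT^j\1$ there. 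The missing step is to differentiate: by the Cauchy--Riemann equations one has $f_j'(\zeta)=\overline\zeta\,\partial u_j(\zeta)$ for $\zeta\in\T_+$, and the correct choice is $F_j(z)=zf_j'(z)$, which then satisfies $F_j|_{\T_+}=\partial u_j=\bT^j\1$.

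Your normality argument also has a gap that this observation repairs. Knowing only that $G=\sum c_jF_j$ is nonnegative on $\T_+$ with bounded $L^1(\T_+)$-norm does not control $\Re G$ inside $\D_+$, and a Poisson representation on $\D_+$ would require boundary data on the diameter $(-1,1)$, where the counting functions may have poles (the exceptional set $S'$ of Lemma~\ref{lem:counting}\ref{lem:counting2} lies in $\overline{\D_-}$ and can meet $[-1,1]$). The paper avoids this entirely by working with the $f_j$ rather than the $F_j$: since each $u_j\ge0$ on $\D_+$ and, after Schwarz reflection across $\T_+$ (where $u_j$ vanishes), $u_j\le0$ on $\C_+\setminus\overline\D$, every convex combination $\sum c_j f_j$ maps $\D_+$ into the right half-plane and $\C_+\setminus\overline\D$ into the left. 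Moreover $|f_j(1)-f_j(-1)|=\int_{\T_+}|\bT^j\1|\le\pi$ forces $f_j(\T_+)\subseteq i(-\pi,\pi)$. Montel's theorem then gives normality of the convex hull of $\{f_j\}$, hence of $\{F_j\}=\{zf_j'\}$. Finally, the finite-Blaschke/approximation dichotomy you sketch is unnecessary here: Lemma~\ref{lem:counting} already handles arbitrary inner $\vartheta,\varphi$ directly.
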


\begin{proof}
We start by defining define two sequences of functions on $\D$, $\{u_j\}$
and $\{v_j\}$, as follows:
\begin{align*}
u_0(z) &= -\log|z|,\\
v_j(z) &= \calN_{\varphi, u_j}(z),\\
u_{j+1}(z) &= \calN_{\vartheta, v_j}(z).
\end{align*}
Here, we are taking the convention that each $\calN_{\varphi, u_j}$ and
$\calN_{\vartheta, v_j}$ have been modified if necessary so that they satisfy
the conclusions of Lemma \ref{lem:counting}. In particular, this means that
each $u_j$ is harmonic on $\D_+$ and $u_j(z)\to0$ as $|z|\to1$ in $\D_+$.
We also have from Lemma \ref{lem:counting} that for each $j\geq0$,
$$
\partial u_{j+1}
=\1_{\T_+}T_{\vartheta}(\partial v_j)
=\1_{\T_+}T_{\vartheta} \1_{\T_-}T_{\varphi}(\partial u_j)
=\bT \partial u_j.
$$ 
Since $\partial u_0 = \1$, we conclude that $\partial u_j=\bT^j\1$ for
all $j\geq0$.

Using the Schwarz reflection principle, for each $j\geq0$, we extend $u_j$
to a real-valued harmonic function on $\C_+$. Let $f_j:\C_+\to\C$ be the
unique analytic function satisfying $\Re f_j=u_j$ and $f_j(i)=0$. It is a simple
consequence of the Cauchy-Riemann equations, that for all $\zeta\in\T_+$,
$$
f_j'(\zeta)
=
\wb\zeta \partial u_j(\zeta)
=
\wb{\zeta}\bT^j\1(\zeta).
$$
Then setting $F_j(z)=zf'_j(z)$, we see that \ref{lem:normal:1} holds.

It remains to show that the convex hull of $\{F_j\}$ is normal. Clearly, it
is sufficient to show that the convex hull of $\{f_j\}$ is normal. To do
this, we observe that each $f_j$ maps $\D_+$ to the right half plane and
$\C_+\setminus\clos{\D}$ to the left half plane. It follows that $f'_j$
cannot vanish on $\T_+$ and so $f_j$ maps $\T_+$ monotonically (in the sense
that $t\mapsto f_j(e^{it})$ is monotonic) onto an interval of $i\R$. Moreover,
$$
|f_j(-1)-f_j(1)|
= \int_0^\pi |f_j'(e^{it})|\dm t
= \int_0^\pi |\bT^j\1(e^{it})|\dm t
\leq \pi,
$$
and so $f_j(\T_+)\subseteq\{iy:-\pi<y<\pi\}$. Clearly, this must also
be true of each convex combination of functions in $\{f_j\}$. The claim now
follows from Montel's theorem.
\end{proof}

\begin{proof}[Proof of Proposition \ref{prop:ergodic}.]
Let $\{F_j\}$ be the sequence of functions given by Lemma \ref{lem:normal}.
First, let us show that the Ces\`{a}ro averages
$$
\frac{1}{N}\sum_{j=0}^{N-1} \bT^j\1 = 
\frac{1}{N}\sum_{j=0}^{N-1} F_j
$$
converge uniformly on compact subsets to zero as $N\to\infty$. By Krengel's
stochastic ergodic theorem \cite[Thm. 4.9]{kre1}, these converge in measure
on $\T_+$ to some function $F\in L^1(\T_+)$ with $\bT F=F$. By Lemma
\ref{lem:normal}, there is a subsequence which converges uniformly on compact
subsets, and moreover, that $F$ extends to an analytic function on
$\C_+$. Finally, by Proposition \ref{prop:analytic} we must have $F=0$.

To complete the proof we take $f\in L^\infty(\T_+)$ and a compactly supported
function $\phi\in C(\T_+)$.
Let $K$ be the support of $\phi$. Then using the positivity of $\bT$ we see that
$$
\Abs{\frac{1}{N}\sum_{j=0}^{N-1} \jap{\bT^j f,\phi}}
\leq
\norm{f}_{\infty}\norm{\phi}_{\infty}
\frac{1}{N}\sum_{j=0}^{N-1}\jap{\bT^j\1,\1_K}
\to 0
\quad\text{as}\quad N\to\infty.
$$
This proves the claim in the case that $f\in L^\infty(\T_+)$. Since $\bT$ is a
contraction, the result follows for all $f\in L^1(\T_+)$ by a simple
approximation argument.
\end{proof}


\bibliographystyle{plain}
\bibliography{references}

\end{document}